\numberwithin{equation}{section}
\newtheoremstyle{myremark}{10pt}{10pt}{}{}{\bfseries}{.}{.5em}{}
\newtheorem{theorem}{Theorem}[section]
\newtheorem{lemma}[theorem]{Lemma}
\theoremstyle{definition}
\newtheorem{remark}{Remark}
\newcommand{\meas}{\left|\mathbb{S}^{d-1}\right|}
\newcommand{\al}{\alpha}
\newcommand{\be}{\beta}
\newcommand{\ga}{\gamma}
\newcommand{\ve}{\varepsilon}
\newcommand{\R}{\mathbb{R}}
\begin{document}

\title[Fractional Hardy--Sobolev inequalities]{Weighted fractional Hardy--Sobolev and Hardy--Sobolev--Maz'ya inequalities with singularities on flat submanifold}

\author{Michał Kijaczko and Vivek Sahu}

\address{Faculty of Pure and Applied Mathematics, Wrocław University of Science and Technology, Wybrzeże Wyspiańskiego 27, 50-370 Wrocław, Poland}
\email{michal.kijaczko@pwr.edu.pl}

\address{Department of Mathematics and Statistics,
Indian Institute of Technology Kanpur, Kanpur - 208016, Uttar Pradesh, India}
\email{viveksahu20@iitk.ac.in}

\subjclass{46E35, 39B72, 26D15}

\keywords{fractional Hardy inequality, fractional Hardy–Sobolev–Maz’ya
inequality, weight,
non-linear ground state
representation, remainder}

\date{}

\dedicatory{}

\begin{abstract}  
We investigate the sharp constant for weighted fractional Hardy inequalities with the singularity on a flat submanifold of codimension $k$, where $1 \leq k < d$. We also prove a weighted fractional Hardy inequality with a remainder. Using this result, we extend and derive a weighted version of the fractional Hardy–Sobolev–Maz'ya inequality with singularities on a flat submanifold. Furthermore, we obtain a weighted logarithmic fractional Hardy–Sobolev–Maz'ya inequality in the case of a singularity at the origin and we show that in this case, the fractional Hardy--Sobolev--Maz'ya inequality does not hold.
\end{abstract}

\maketitle

%--------------------------INTRODUCTION--------------------------------

\section{Introduction}

The fractional Hardy inequalities have garnered significant interest among researchers. Frank and Seiringer \cite{Frank2008} introduced a fractional version of the general Hardy inequalities with the sharp constant by using nonlinear ground state representation. They showed that the following fractional Hardy inequality holds for any function $u \in W^{s,p}(\mathbb{R}^{d})$ when $1 \leq p < \frac{d}{s}$, and for any $u \in W^{s,p}(\mathbb{R}^{d} \setminus \{ 0 \})$ when $p > \frac{d}{s}$, with an optimal constant $C>0$:  
\begin{equation}
\int_{\mathbb{R}^{d}}\int_{\mathbb{R}^{d}} \frac{|u(x)-u(y)|^{p}}{|x-y|^{d+sp}} \, dx \, dy \geq C\int_{\mathbb{R}^{d}} \frac{|u(x)|^{p}}{|x|^{sp}} \, dx.
\end{equation}
Building on this foundation, several generalizations and refinements have been developed, particularly for cases involving singularities at a point (e.g., the origin) and in a half-space, while preserving the optimal constant.  We refer to \cite{Bogdan2011, Frank2009}.  For other (nonweighted) fractional Hardy inequalities, we refer to \cite{AdiJana2024, Bianchi2024, Lieb2008, Loss2010}.

\smallskip

This article focuses on deriving the sharp constant for the weighted fractional Hardy inequalities with singularities on any flat submanifold of codimension $k$, where $1 \leq k < d$. While this inequality was established in \cite{Sahu2025}, the optimality of the sharp constant was not addressed. Here, we fill this gap by establishing the sharp constant. In this article, we also establish a weighted fractional Hardy inequality with a remainder for $p>1$, and using this result, we extend and obtain a weighted version of the fractional Hardy--Sobolev--Maz'ya inequality with singularities on any flat submanifold of codimension $k$, where $1 \leq k < d$. We also take into consideration the case $k=d$ (i.e., singularity at the origin) and establish a suitable weighted fractional Hardy–Sobolev–Maz'ya inequality with a logarithmic weight function.

\smallskip

Weighted fractional Hardy inequalities with singularities at a point (e.g., the origin) and in a half-space were established in \cite{dyda2022sharp} (see also \cite{dyda2024}). Specifically, consider the case of a point singularity. Let $s \in (0,1)$, $p \geq 1$, and $\alpha, \beta, \alpha + \beta \in (-d, sp)$. For all $u \in C_{c}(\mathbb{R}^{d})$ when $sp - \alpha - \beta < d$, and for all $u \in C_{c}(\mathbb{R}^{d} \setminus \{0\})$ when $sp - \alpha - \beta > d$, the following inequality holds with an optimal constant $\mathcal{C}_{1} > 0$:
\begin{equation}\label{Weighted fractional Hardy : point singularity}
\int_{\mathbb{R}^{d}}\int_{\mathbb{R}^{d}} \frac{|u(x)-u(y)|^{p}}{|x-y|^{d+sp}} |x|^{\alpha} |y|^{\beta} \, dx \, dy \geq  \mathcal{C}_{1}  \int_{\mathbb{R}^{d}} \frac{|u(x)|^{p}}{|x|^{sp-\alpha-\beta}} \, dx ,
\end{equation}
where
\begin{equation}\label{The value C 1}
\mathcal{C}_{1} = \mathcal{C}_{1}(d,s,p, \alpha, \beta) = \int_{0}^{1} r^{sp-1} (r^{-\alpha} +r^{-\beta
}) \left| 1-r^{(d+\alpha+\beta-sp)/p} \right|^{p} \Phi_{d,s,p}(r) \, dr.
\end{equation}
Here, 
\begin{equation}
\Phi_{d,s,p}(r) = \begin{dcases}
        |\mathbb{S}^{d-2}| \int_{-1}^{1} \frac{(1-t^{2})^{\frac{d-3}{2}}}{(1-2tr+r^{2})^{\frac{d+sp}{2}}} \, dt, & d\geq 2 \\ 
        (1-r)^{-1-sp}+ (1+r)^{-1-sp}, & d = 1.
    \end{dcases}
\end{equation}

\smallskip

We introduce the weighted fractional Sobolev space with the following definition. Let $s \in (0,1)$, $p \geq 1$, and $\alpha, \beta \in \mathbb{R}$. For any $x \in \mathbb{R}^{d}$, we write $x=(x_{k}, x_{d-k})$, where $1 \leq k < d$, with $k \in \mathbb{N}$, $x_{k} \in \mathbb{R}^{k}$, and $x_{d-k} \in \mathbb{R}^{d-k}$. The weighted Gagliardo seminorm associated with a fixed $k$ is defined as
\begin{equation}\label{weighted Gagliardo}
[u]_{W^{s,p}_{\al,\be;k}(\mathbb{R}^{d})} := \left( \int_{\mathbb{R}^{d}} \int_{\mathbb{R}^{d}}  \frac{|u(x)-u(y)|^{p}}{|x-y|^{d+sp}} |x_{k}|^{\alpha} |y_{k}|^{\beta} \, dx \, dy \right)^{\frac{1}{p}}.
\end{equation}
The corresponding weighted fractional Sobolev space is defined as
\begin{equation}
W^{s,p}_{\al,\be;k}(\mathbb{R}^{d}) := \left\{ u \in L^{p}\left(\mathbb{R}^{d},|x_k|^{\al+\be}\right) : [u]_{W^{s,p}_{\al,\be;k}(\mathbb{R}^{d})} < \infty \right\}.
\end{equation}

For parameters $\alpha, \beta \in (-k, sp)$ satisfying $\alpha + \beta > -k$, the space $C^{1}_{c}(\mathbb{R}^{d})$ is contained in $W^{s,p}_{\al,\be;k}(\mathbb{R}^{d})$ (see \cite[Lemma $2.1$]{Sahu2025}). For convention, when $\alpha=\beta=0$, we denote $ [u]_{W^{s,p}_{0,0;k}(\mathbb{R}^{d})} := [u]_{W^{s,p}(\mathbb{R}^{d})} $ and refer to the space $W^{s,p}_{0,0;k}(\mathbb{R}^{d})= W^{s,p}(\mathbb{R}^{d})$ as the usual fractional Sobolev space. When $k=d$, we omit it in the notation. For further studies on weighted fractional Sobolev spaces, we refer to \cite{valdinoci2015, kijacko2025, kijacko2023}.

\smallskip

For a fixed  $1\leq k<d$, we consider the flat submanifold  
\begin{equation}\label{k defn}
   K:= \{x= (x_{k}, x_{d-k}) \in \mathbb{R}^{k} \times \mathbb{R}^{d-k} : x_{k} =0 \}
\end{equation}
of codimension $k$. For $k=d$, we follow the convention  $K=\{0\}$. In the following theorem, we establish the sharp constant for the weighted fractional Hardy inequality with a singularity on $K$. While a version of this result is presented in \cite[Theorem 1]{Sahu2025}, we provide it here with the explicit sharp constant.

%---------Theorem 1----------------

\begin{theorem}[Sharp weighted fractional Hardy inequality]\label{Theorem : Sharp weighted fractional Hardy}
Let $d \geq 1, ~ p\geq 1, ~ s \in (0,1)$ and $1\leq k\leq d, ~ k \in \mathbb{N}$. Assume that $\alpha, \beta \in \mathbb{R}$ are such that $\alpha, \beta, \alpha+\beta \in (-k,sp)$. Let $K$ be a flat submanifold defined in \eqref{k defn}. Then, for all $u \in C^{1}_{c}(\mathbb{R}^{d})$ when $sp<k+ \alpha+\beta$, and for all $u \in C^{1}_{c}(\mathbb{R}^{d} \setminus K)$ when $sp> k+ \alpha+ \beta$, the following inequality holds: 
     \begin{equation}
     \int_{\mathbb{R}^{d}} \int_{\mathbb{R}^{d}} \frac{|u(x)-u(y)|^{p}}{|x-y|^{d+sp}} |x_{k}|^{\alpha} |y_{k}|^{\beta} \, dx \, dy \geq \mathcal{C}   \int_{\mathbb{R}^{d}} \frac{|u(x)|^{p}}{|x_{k}|^{sp-\alpha-\beta}} \, dx , 
    \end{equation}
    where $\mathcal{C}(d,s,p,k,\alpha, \beta)>0$ is the optimal constant given by \eqref{The value C}.
\end{theorem}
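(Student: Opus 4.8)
The plan is to establish the inequality and its sharpness simultaneously through the nonlinear ground state representation of Frank--Seiringer \cite{Frank2008}, adapted to the codimension-$k$ geometry, with the explicit constant extracted by integrating out the transverse directions and invoking the point-singularity computation of \cite{dyda2022sharp}. First I would symmetrize the weight: since $|u(x)-u(y)|^p$ is symmetric, the left-hand side equals
\begin{equation*}
\int_{\R^d}\int_{\R^d}\frac{|u(x)-u(y)|^p}{|x-y|^{d+sp}}\,\frac{|x_k|^{\al}|y_k|^{\be}+|x_k|^{\be}|y_k|^{\al}}{2}\,dx\,dy=:\int_{\R^d}\int_{\R^d}|u(x)-u(y)|^p K(x,y)\,dx\,dy,
\end{equation*}
so that the effective kernel $K(x,y)$ is symmetric and nonnegative. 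The virtual ground state is $w(x)=|x_k|^{-\sigma}$ with $\sigma=(k+\al+\be-sp)/p$; it is singled out by the identity $w(x)^p|x_k|^{-(sp-\al-\be)}=|x_k|^{-k}$, whose logarithmic divergence in $x_k$ is the signature of an extremal. The hypotheses $\al,\be,\al+\be\in(-k,sp)$ guarantee that $w$ is locally $p$-integrable off $K$ and that the principal value below converges.

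Second, writing $u=wv$ I would apply the pointwise convexity inequality underlying the ground state representation to obtain
\begin{equation*}
\int_{\R^d}\int_{\R^d}|u(x)-u(y)|^p K(x,y)\,dx\,dy=\int_{\R^d}V(x)\,|u(x)|^p\,dx+\mathcal{R}[v],
\end{equation*}
where $\mathcal{R}[v]\ge 0$ is a remainder vanishing for $v$ constant and
\begin{equation*}
V(x)=\mathrm{p.v.}\int_{\R^d}\frac{|w(x)-w(y)|^{p-2}(w(x)-w(y))}{w(x)^{p-1}}\,K(x,y)\,dy.
\end{equation*}
Scaling $x\mapsto\lambda x$ together with translation invariance in $x_{d-k}$ and rotational invariance in $x_k$ forces $V(x)=\mathcal{C}\,|x_k|^{-(sp-\al-\be)}$ for a constant $\mathcal{C}=V(e)$, $|e_k|=1$; discarding $\mathcal{R}[v]\ge 0$ then yields the stated inequality. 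Because $\mathcal{R}$ vanishes precisely on constants, $\mathcal{C}$ is also the renormalized Rayleigh quotient of $w$, which is why it appears with the $p$-th power.

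Third, I would compute $\mathcal{C}$ by integrating out the $d-k$ transverse directions. Since $w$ and the weights depend only on $x_k$, for fixed $x_k,y_k$ one has
\begin{equation*}
\int_{\R^{d-k}}\frac{dz}{\big(|x_k-y_k|^2+|z|^2\big)^{\frac{d+sp}{2}}}=c_{d,k,s,p}\,|x_k-y_k|^{-(k+sp)},\qquad c_{d,k,s,p}=\int_{\R^{d-k}}\frac{dz}{(1+|z|^2)^{\frac{d+sp}{2}}},
\end{equation*}
the latter being finite as $sp>-k$ (and $c_{d,d,s,p}=1$, recovering the point case $k=d$). After this reduction the effective kernel is that of the $k$-dimensional point-singularity problem with the same $sp$, so the radial and angular integration carried out exactly as for \eqref{The value C 1} with $d$ replaced by $k$ produces
\begin{equation*}
\mathcal{C}=c_{d,k,s,p}\int_0^1 r^{sp-1}\big(r^{-\al}+r^{-\be}\big)\big|1-r^{(k+\al+\be-sp)/p}\big|^p\,\Phi_{k,s,p}(r)\,dr,
\end{equation*}
which is the value \eqref{The value C}; its positivity is immediate from the integrand.

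Finally, for sharpness I would test against a family $u_{\ve}=\eta_{\ve}\,w$, where $\eta_{\ve}$ truncates $w$ to an annulus $\{\ve<|x_k|<\ve^{-1}\}$ and to a large ball in the transverse variable. The transverse volume and the logarithmic factor $\log(1/\ve)$ occur to leading order in both the energy and the right-hand side and cancel, so the Rayleigh quotient of $u_{\ve}$ tends to $\mathcal{C}$, proving optimality. The main obstacle is exactly this step: the ground state $w$ is inadmissible (both its energy and its weighted $L^p$-norm diverge logarithmically), so one must show that the errors generated by the cutoffs near $K$, near infinity, and in the transverse directions are $o(\log(1/\ve))$ and hence negligible in the limit. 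A secondary technical point is that the sign of $\sigma$, equivalently the dichotomy $sp\lessgtr k+\al+\be$, governs the integrability of both $w$ and the right-hand weight near $K$ and fixes the admissible test class ($C_c^1(\R^d)$ versus $C_c^1(\R^d\setminus K)$), so the truncation near $K$ must be arranged accordingly in each regime.
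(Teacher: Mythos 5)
Your first half --- the inequality itself with the explicit constant --- follows essentially the same route as the paper: symmetrize the kernel, take $\omega(x)=|x_k|^{-\gamma}$ with $\gamma=(k+\alpha+\beta-sp)/p$ as ground state, apply the Frank--Seiringer framework, and compute $V$ by integrating out the transverse $(d-k)$ variables to reduce to the $k$-dimensional point-singularity constant of \cite{dyda2022sharp}; your value of $\mathcal{C}$ agrees with \eqref{The value C}. (Two minor slips there: your displayed formula for $V$ drops the factor $2$ that must accompany the symmetrized kernel $K(x,y)$, and for $1\leq p<2$ the representation is an inequality $E[u]\geq\int V|u|^p$ rather than an identity with a nonnegative remainder --- both harmless for this theorem, since only the inequality is needed.)

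The sharpness half is where you genuinely diverge from the paper, and where your proposal has a real gap. You test with truncations $u_\ve=\eta_\ve\,\omega$ of the (inadmissible) ground state and assert that the cutoff errors near $K$, near infinity, and in the transverse directions are $o(\log(1/\ve))$ --- but you explicitly defer these estimates, calling them ``the main obstacle.'' In the nonlocal setting those estimates \emph{are} the proof: truncating a Gagliardo seminorm produces cross terms between the annulus $\{\ve<|x_k|<\ve^{-1}\}$ and its complement, and between the transverse ball of radius $L$ and its complement, and these do not obviously cancel. For instance, the transverse cross term is of order $\ve^{-sp}L^{d-k-sp}$ against a main term of order $L^{d-k}\log(1/\ve)$, so one must couple $L$ to $\ve$ (e.g.\ $L\gg\ve^{-1}$) and then re-examine the remaining cross terms under that coupling; none of this is carried out, and without it the optimality claim is unproven. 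The paper avoids this entirely by a different device: it tests with separated functions $u_N(x)=\eta(x_k)\phi_N(x_{d-k})$, where $\phi_N$ is an $L^p$-normalized profile spread over scale $N$, splits the seminorm by Minkowski's inequality into a term $I_1$ that factorizes exactly into the transverse constant times $[\eta]^p_{W^{s,p}_{\alpha,\beta}(\R^k)}$ and a term $I_2$ that is shown to vanish as $N\to\infty$ (via Young's inequality, the Muckenhoupt $A_1$ property of $|x_k|^\beta$, and a maximal-function estimate), and then invokes the \emph{already known} sharpness of $\mathcal{C}_1$ for the point singularity in $\R^k$. That reduction is what lets the paper quote \cite{dyda2022sharp} instead of redoing ground-state asymptotics; your plan uses \cite{dyda2022sharp} only for the value of the constant, so you would have to supply the hard truncation analysis yourself. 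As written, the proposal proves the inequality but not the optimality of $\mathcal{C}$.
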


Assuming $p \geq 2$, the following theorem provides the sharp weighted fractional Hardy inequality with a remainder term and a singularity on $K$.

%---------Theorem 2----------------

\begin{theorem}[Sharp weighted fractional Hardy inequality for $p \geq 2$ with a remainder]\label{Theorem : Sharp weighted fractional Hardy inequality for p geq 2 with a remainder}
Let $d \geq 1, ~ p \geq 2, ~ s \in (0,1)$ and $1\leq k\leq d, ~ k \in \mathbb{N}$. Assume that $\alpha, \beta \in \mathbb{R}$ are such that $\alpha, \beta, \alpha+\beta \in (-k,sp)$. Let $K$ be a flat submanifold defined in \eqref{k defn}. Then, for all $u \in C^{1}_{c}(\mathbb{R}^{d})$ when $sp<k+ \alpha+\beta$, and for all $u \in C^{1}_{c}(\mathbb{R}^{d} \setminus K)$ when $sp> k+ \alpha+ \beta$, the following inequality holds: 
     \begin{equation}
     \begin{split}
     \int_{\mathbb{R}^{d}} \int_{\mathbb{R}^{d}}  \frac{|u(x)-u(y)|^{p}}{|x-y|^{d+sp}} & |x_{k}|^{\alpha} |y_{k}|^{\beta} \, dx \, dy - \mathcal{C}   \int_{\mathbb{R}^{d}} \frac{|u(x)|^{p}}{|x_{k}|^{sp-\alpha-\beta}} \, dx \\ & \geq c_{p} \int_{\mathbb{R}^{d}} \int_{\mathbb{R}^{d}} \frac{|v(x)-v(y)|^{p}}{|x-y|^{d+sp}} |x_{k}|^{-(k-\alpha+\beta-sp)/2} |y_{k}|^{-(k+\alpha-\beta-sp)/2} \, dx \, dy , 
     \end{split}
    \end{equation}
    where $v(x)= |x_{k}|^{(k+\alpha+\beta-sp)/p}u(x)$, $\mathcal{C}(d,s,p,k,\alpha, \beta)>0$ is the optimal constant given by \eqref{The value C}, and $c_{p}$ is given by
    \begin{equation}\label{Value cp}
    c_{p} = \min_{0<\tau<1/2} \left( (1-\tau)^{p} - \tau^{p}+ p \tau^{p-1} \right).
    \end{equation}
    For $p=2$, this inequality becomes an equality with $c_{2} =1$.
\end{theorem}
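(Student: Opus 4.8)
The plan is to prove this through a nonlinear ground state representation in the spirit of Frank and Seiringer \cite{Frank2008}. The central object is the virtual ground state $\omega(x) = |x_{k}|^{-(k+\alpha+\beta-sp)/p}$, namely the (non-integrable) power of $|x_{k}|$ that realizes equality in Theorem~\ref{Theorem : Sharp weighted fractional Hardy}; observe that $v = u/\omega$ is precisely the function appearing in the statement. Writing $u = \omega v$ and abbreviating $a = \omega(x)$, $b = \omega(y)$, $\sigma = v(x)$, $\tau = v(y)$, the first and decisive step is the elementary pointwise inequality, valid for $p \geq 2$, $a,b>0$, and $\sigma,\tau \in \R$,
\[
|a\sigma - b\tau|^{p} - |a-b|^{p-2}(a-b)\bigl(a|\sigma|^{p} - b|\tau|^{p}\bigr) \;\geq\; c_{p}\,(ab)^{p/2}\,|\sigma-\tau|^{p},
\]
with $c_{p}$ as in \eqref{Value cp}. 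I would verify this by homogeneity: both sides are homogeneous of degree $p$ separately in $(a,b)$ and in $(\sigma,\tau)$, so after normalizing one reduces to a one-variable minimization whose interior extremum is captured by the range $\tau\in(0,1/2)$ in \eqref{Value cp}. When $\sigma=\tau$ both sides vanish; when $a=b=1$ the left side is $|\sigma-\tau|^{p}$, forcing $c_{p}\le 1$; and for $p=2$ the left side collapses identically to $ab|\sigma-\tau|^{2}$, which is exactly why the inequality becomes an equality with $c_{2}=1$.

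Next I multiply this pointwise inequality by the nonnegative kernel $|x-y|^{-(d+sp)}|x_{k}|^{\alpha}|y_{k}|^{\beta}$ and integrate over $\R^{d}\times\R^{d}$. The term $c_{p}(ab)^{p/2}|\sigma-\tau|^{p}$ integrates directly to
\[
c_{p}\int_{\R^{d}}\int_{\R^{d}} \frac{|v(x)-v(y)|^{p}}{|x-y|^{d+sp}}\,\bigl(\omega(x)\omega(y)\bigr)^{p/2}|x_{k}|^{\alpha}|y_{k}|^{\beta}\,dx\,dy,
\]
and a short exponent computation gives $\bigl(\omega(x)\omega(y)\bigr)^{p/2}|x_{k}|^{\alpha}|y_{k}|^{\beta} = |x_{k}|^{-(k-\alpha+\beta-sp)/2}|y_{k}|^{-(k+\alpha-\beta-sp)/2}$, which is precisely the weight in the claimed remainder. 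For the subtracted ``main'' term I symmetrize in $x\leftrightarrow y$: the kernel $|x-y|^{-(d+sp)}$ is symmetric while the factor $|a-b|^{p-2}(a-b)$ is antisymmetric, so the two halves combine, and after replacing $|x_{k}|^{\alpha}|y_{k}|^{\beta}$ by its symmetrization $|x_{k}|^{\alpha}|y_{k}|^{\beta}+|x_{k}|^{\beta}|y_{k}|^{\alpha}$ the main term reduces to $\int_{\R^{d}} \omega(x)|v(x)|^{p}\,(L\omega)(x)\,dx$, where $L\omega$ is the principal-value operator $\mathrm{p.v.}\!\int \frac{|\omega(x)-\omega(y)|^{p-2}(\omega(x)-\omega(y))}{|x-y|^{d+sp}}\bigl(|x_{k}|^{\alpha}|y_{k}|^{\beta}+|x_{k}|^{\beta}|y_{k}|^{\alpha}\bigr)dy$ applied to $\omega$.

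Finally I invoke the ground state computation underlying Theorem~\ref{Theorem : Sharp weighted fractional Hardy}: since $\omega$ is a pure power of $|x_{k}|$, the principal value $L\omega$ is evaluated by scaling together with a reduction of the $y$-integral to a radial variable and an integral over the codimension-$k$ sphere, producing exactly $(L\omega)(x) = \mathcal{C}\,\omega(x)^{p-1}|x_{k}|^{-(sp-\alpha-\beta)}$ with the same constant $\mathcal{C}$ of \eqref{The value C} (this is how that sharp constant is identified in the first place). Substituting, the main term equals $\mathcal{C}\int_{\R^{d}} |u|^{p}|x_{k}|^{-(sp-\alpha-\beta)}\,dx$ exactly, and combining with the remainder yields the asserted inequality; for $p=2$ the pointwise step is an identity, so the whole chain is an equality with $c_{2}=1$.

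The hard part will be the rigorous treatment of the principal value and the symmetrization, rather than the pointwise inequality or the exponent bookkeeping, which are elementary. Specifically, one must justify interchanging the $x$- and $y$-integrations and the convergence of $L\omega$ both near the singular set $K$ and at infinity, separately in the two regimes $sp<k+\alpha+\beta$ and $sp>k+\alpha+\beta$ (which dictate whether $u$ is required to vanish near $K$). I would handle this with the cutoff and integrability estimates already developed in the proof of Theorem~\ref{Theorem : Sharp weighted fractional Hardy}, reusing them verbatim so that the convergence of every integral above, and the legitimacy of the symmetrization, is guaranteed under the stated hypotheses on $\alpha,\beta$.
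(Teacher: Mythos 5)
Your proposal is correct and follows essentially the same route as the paper: the paper's proof consists of citing the Frank--Seiringer ground state representation with remainder (inequality \eqref{General Hardy with weight}, whose core is exactly your pointwise inequality with constant $c_p$) together with Lemma \ref{Lemma : on flat submanifold}, which is precisely your principal-value computation of $L\omega$ for the power weight $\omega(x)=|x_{k}|^{-(k+\alpha+\beta-sp)/p}$ against the symmetrized kernel $\tfrac12|x-y|^{-d-sp}\left(|x_{k}|^{\alpha}|y_{k}|^{\beta}+|x_{k}|^{\beta}|y_{k}|^{\alpha}\right)$. The only difference is that you unpack the general machinery rather than invoking it as a black box.
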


For the case $p=2$, the unweighted versions of Theorems \ref{Theorem : Sharp weighted fractional Hardy} and \ref{Theorem : Sharp weighted fractional Hardy inequality for p geq 2 with a remainder} were proved by Mallick in \cite{MR3910887}.

\smallskip

Next, we present the sharp weighted fractional Hardy inequality with a remainder for the case $1 < p < 2$. The cases $k = 1$ and $k = d$, corresponding to singularities at a point (the origin) and on a half-space, were established in \cite{dyda2024}. We extend this result to the case of singularities on a flat submanifold $K$ of codimension $k$, where $1 < k < d$. We define $a^{\langle k\rangle} := |a|^k \text{sgn}(a)$. The following theorem states this result.

\begin{theorem}[Sharp weighted fractional Hardy inequality for $1<p< 2$ with a remainder]\label{Theorem : Sharp weighted fractional Hardy inequality for 1<p< 2 with a remainder}
    Let $d \geq 1, ~ 1<p <2, ~ s \in (0,1)$ and $1\leq k\leq d, ~ k \in \mathbb{N}$. Assume that $\alpha, \beta \in \mathbb{R}$ are such that $\alpha, \beta, \alpha+\beta \in (-k,sp)$. Let $K$ be a flat submanifold defined in \eqref{k defn}. Then, for all $u \in C^{1}_{c}(\mathbb{R}^{d})$ when $sp<k+ \alpha+\beta$, and for all $u \in C^{1}_{c}(\mathbb{R}^{d} \setminus K)$ when $sp> k+ \alpha+ \beta$, the following inequality holds: 
     \begin{equation}
     \begin{split}
     \int_{\mathbb{R}^{d}} \int_{\mathbb{R}^{d}}  \frac{|u(x)-u(y)|^{p}}{|x-y|^{d+sp}}  & |x_{k}|^{\alpha} |y_{k}|^{\beta} \, dx \, dy - \mathcal{C}   \int_{\mathbb{R}^{d}} \frac{|u(x)|^{p}}{|x_{k}|^{sp-\alpha-\beta}} \, dx \\ & \geq C_{p} \int_{\mathbb{R}^{d}} \int_{\mathbb{R}^{d}} \frac{\left( v(x)^{\langle p/2 \rangle}-v(y)^{\langle p/2 \rangle} \right)^{2}}{|x-y|^{d+sp}} W(x,y) |x_{k}|^{\alpha} |y_{k}|^{\beta}  \, dx \, dy , 
     \end{split}
    \end{equation}
    where $v(x)= |x_{k}|^{(k+\alpha+\beta-sp)/p}u(x)$, 
    \begin{equation}\label{Defn W(x,y) in theorem}
        W(x,y)= \min \left\{ |x_{k}|^{- \frac{k+\alpha+\beta-sp}{p}}, |y_{k}|^{- \frac{k+\alpha+\beta-sp}{p}} \right\} \max \left\{ |x_{k}|^{- \frac{k+\alpha+\beta-sp}{p}}, |y_{k}|^{- \frac{k+\alpha+\beta-sp}{p}} \right\}^{p-1},
    \end{equation}
     the constant $\mathcal{C}(d,s,p,k,\alpha, \beta)>0$ is  given by \eqref{The value C}, and $C_{p}$ is given by 
    \begin{equation}\label{The value Cp for 1<p<2}
      C_{p}  = \max \left\{ \frac{p-1}{p}, \frac{p(p-1)}{2} \right\}.
    \end{equation}
When $u$ is nonnegative, the constant $C_p$ can be taken as $p-1$.
\end{theorem}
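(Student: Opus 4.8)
The plan is to run the nonlinear ground-state representation, exactly as in the proof of Theorem~\ref{Theorem : Sharp weighted fractional Hardy}, and then to upgrade the bare positivity used there into a quantitative lower bound. I set $u=\omega v$ with the virtual ground state $\omega(x):=|x_k|^{-(k+\alpha+\beta-sp)/p}$, which solves the Euler--Lagrange equation whose associated eigenvalue is exactly $\mathcal{C}$; this is the same $\omega$ and the same normalization that produce the sharp constant in Theorem~\ref{Theorem : Sharp weighted fractional Hardy}. The computation underlying that theorem rewrites the deficit
\begin{equation*}
\mathcal{D}[u] := \int_{\R^d}\int_{\R^d}\frac{|u(x)-u(y)|^p}{|x-y|^{d+sp}}|x_k|^\alpha|y_k|^\beta\,dx\,dy - \mathcal{C}\int_{\R^d}\frac{|u(x)|^p}{|x_k|^{sp-\alpha-\beta}}\,dx
\end{equation*}
as an integral $\mathcal{D}[u]=\int_{\R^d}\int_{\R^d}\frac{|x_k|^\alpha|y_k|^\beta}{|x-y|^{d+sp}}\,\mathcal{J}\big(\omega(x),\omega(y),v(x),v(y)\big)\,dx\,dy$ of a pointwise defect $\mathcal{J}$ that is nonnegative and vanishes whenever $v(x)=v(y)$; the prototype, obtained for $p=2$ from the algebraic identity $|A\phi-B\psi|^2=(A-B)(A\phi^2-B\psi^2)+AB(\phi-\psi)^2$, has $\mathcal{J}=AB(\phi-\psi)^2$. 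Theorem~\ref{Theorem : Sharp weighted fractional Hardy} is precisely the statement $\mathcal{J}\ge0$, so the whole task reduces to quantifying how far $\mathcal{J}$ sits above zero.

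Concretely, the remainder claimed in the theorem will follow once I establish the pointwise inequality
\begin{equation*}
\mathcal{J}(A,B,\phi,\psi) \ge C_p\,\min(A,B)\,\max(A,B)^{p-1}\big(\phi^{\langle p/2\rangle}-\psi^{\langle p/2\rangle}\big)^2,\qquad A,B>0,\ \phi,\psi\in\R,
\end{equation*}
with the constant improved to $p-1$ when $\phi,\psi\ge0$; indeed, recognizing $\min(\omega(x),\omega(y))\max(\omega(x),\omega(y))^{p-1}=W(x,y)$ and $v(x)^{\langle p/2\rangle}=\phi^{\langle p/2\rangle}$ and integrating against $|x_k|^\alpha|y_k|^\beta|x-y|^{-d-sp}$ reproduces the right-hand side verbatim. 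The defect $\mathcal{J}$ is homogeneous of degree $p$ in $(\phi,\psi)$ and invariant under the simultaneous swap $(A,\phi)\leftrightarrow(B,\psi)$, a symmetry shared by both sides; I may therefore assume $A\le B$ and, rescaling $(A,B)$, normalize $\max(A,B)=B=1$, so that $\min(A,B)\max(A,B)^{p-1}=A=:r\in(0,1]$. The claim collapses to a one-parameter family of two-variable inequalities in $(\phi,\psi)$, and a final use of degree-$p$ homogeneity reduces it to a compact one-variable estimate (fixing, say, $\phi^2+\psi^2=1$).

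Proving this normalized inequality, with the exact constant rather than mere positivity, is the main obstacle, and the argument branches on the signs of $\phi$ and $\psi$. In the favorable case $\phi,\psi\ge0$ the map $\phi\mapsto\phi^{\langle p/2\rangle}=\phi^{p/2}$ is smooth off the diagonal, both sides vanish to second order along the common zero set of $\mathcal{J}$ (where $\phi=\psi$), and a Taylor expansion there together with an inspection of the limiting rays $\phi\to0$ and $\psi\to0$ yields the clean constant $p-1$ --- this is why nonnegativity is advantageous. The delicate case is $\phi\psi<0$: now $\mathcal{J}$ stays bounded away from its zero set while $\big(\phi^{\langle p/2\rangle}-\psi^{\langle p/2\rangle}\big)^2$ can be large, and the worst configurations dictate the smaller, sign-sensitive value $C_p=\max\{(p-1)/p,\,p(p-1)/2\}$. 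I expect the two competing values in this maximum to arise from two distinct extremal regimes --- one governed by the second-order behaviour near the diagonal and one by an opposite-sign boundary configuration --- and that away from neighborhoods of these the inequality holds with slack, reducing the verification to minimizing an explicit smooth function of a single variable over a compact interval and matching its minimum with $C_p$.

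Finally, two justifications are routine but must be recorded. The representation identity for $\mathcal{D}[u]$ rests on the principal-value Euler--Lagrange equation for $\omega$ together with Fubini's theorem; these are licensed exactly in the two stated regimes, namely for $u\in C^1_c(\R^d)$ when $sp<k+\alpha+\beta$ and for $u\in C^1_c(\R^d\setminus K)$ when $sp>k+\alpha+\beta$, which are the ranges in which $u=\omega v$ is admissible and the attendant singular integrals converge, precisely as in Theorem~\ref{Theorem : Sharp weighted fractional Hardy}. Moreover, all integrals appearing on the right-hand side are finite for this class of $u$, so once the pointwise inequality is in hand no further approximation is needed beyond that already used to prove the Hardy inequality itself; the passage $\phi,\psi\ge0\Rightarrow C_p\rightsquigarrow p-1$ is immediate, since for nonnegative $u$ one has $v\ge0$ and the sharper branch of the pointwise inequality applies throughout.
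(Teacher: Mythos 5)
Your strategy coincides with the paper's up to its final step: you substitute $u=\omega v$ with the same ground state $\omega(x)=|x_k|^{-(k+\alpha+\beta-sp)/p}$, use the ground-state representation (whose potential is identified by Lemma \ref{Lemma : on flat submanifold}) to write the Hardy deficit as the integral of a pointwise defect $\mathcal{J}$ against the symmetrized kernel, and correctly reduce the theorem to the pointwise bound
\begin{equation*}
\mathcal{J}(A,B,\phi,\psi)\ \geq\ C_p\,\min(A,B)\,\max(A,B)^{p-1}\bigl(\phi^{\langle p/2\rangle}-\psi^{\langle p/2\rangle}\bigr)^{2},
\end{equation*}
with constant $p-1$ when $\phi,\psi\geq 0$. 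The paper closes this reduction by citing the general remainder inequality \eqref{General Hardy with weight1<p<2} for $1<p<2$, established in \cite{dyda2024} (see also \cite{MR4597627}); that cited result is exactly the integrated form of your pointwise claim, constants included. You, by contrast, undertake to prove the pointwise inequality yourself, and this is where the proposal has a genuine gap: the proof is never carried out. After the (correct) reduction by swap-symmetry and degree-$p$ homogeneity, you describe what you ``expect'' the extremal regimes to be and assert that the verification reduces to ``minimizing an explicit smooth function of a single variable over a compact interval and matching its minimum with $C_p$.'' That minimization \emph{is} the hard part of the theorem---it occupies the bulk of \cite{dyda2024}---and nothing in your sketch produces the specific values $(p-1)/p$ and $p(p-1)/2$, explains why their maximum (with crossover at $p=\sqrt{2}$) is the right constant, or proves the improvement to $p-1$ in the nonnegative case.

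Two further points make the sketch insufficient as it stands. First, after normalizing $\max(A,B)=1$ you still carry the parameter $r=\min(A,B)\in(0,1]$, so fixing $\phi^{2}+\psi^{2}=1$ leaves a one-parameter family of circle estimates whose constant must be uniform as $r$ ranges over $(0,1]$, including the degenerate limit $r\to 0^{+}$; this is not a single compact one-variable problem. Second, a Taylor expansion of $\mathcal{J}$ near the diagonal $\phi=\psi$ together with an inspection of the rays $\phi\to 0$ and $\psi\to 0$ controls the ratio only locally; to conclude, you would need a global argument showing that the infimum of the ratio over the whole parameter region is governed by those regimes, which is precisely the delicate case analysis (in particular for $\phi\psi<0$) that you leave open. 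Either carry out this analysis in full, or do what the paper does and invoke \eqref{General Hardy with weight1<p<2} directly; with that citation your argument becomes complete and is then essentially identical to the paper's proof.
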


%---------Theorem 4----------------
\begin{remark}
    One can show that the statements of Theorems \ref{Theorem : Sharp weighted fractional Hardy}, \ref{Theorem : Sharp weighted fractional Hardy inequality for p geq 2 with a remainder} and \ref{Theorem : Sharp weighted fractional Hardy inequality for 1<p< 2 with a remainder} are true also for $s=0$. The reason is that the weighted Gagliardo seminorm \eqref{weighted Gagliardo} is finite for $s=0$, $\al,\be,\al+\be\in(-k,0)$ and $u\in C_c^1(\R^d)$, in contrast to the unweighted case.
\end{remark}

The following theorem provides the weighted fractional Hardy--Sobolev inequality with a singularity on $K$ for the case $sp \leq d$. This result is instrumental in deriving the weighted fractional Hardy--Sobolev--Maz'ya inequality with a singularity on $K$ for $p >1$.

\begin{theorem}[Weighted fractional Hardy--Sobolev inequality for the case $sp \leq d$]\label{Theorem : Weighted fractional Hardy  sp leq d}
Let $d \geq 2$, $p > 1$, $q > 1$, and $s \in (0,1)$. Suppose that $1 \leq k < d$ with $k \in \mathbb{N}$, and let $\alpha, \beta \in \mathbb{R}$. Define $
\theta = d + (sp - d) \frac{q}{p}$.  
If $sp < d$, assume that $p < q \leq \frac{dp}{d - sp}$, and if $sp = d$, assume that $q > p$. Then, there exists a constant $C = C(d, s, p, q,k, \alpha, \beta) > 0$ such that for all $u \in C^{1}_{c}(\mathbb{R}^{d})$, the following inequality holds:
\begin{equation}\label{Theorem : Weighted fractional Hardy  sp leq d ineq 1}
\begin{split}
      \Bigg( \int_{\mathbb{R}^{d}} \int_{\mathbb{R}^{d}} \frac{|u(x)-u(y)|^{p}}{|x-y|^{d+sp}}  |x_{k}|^{-(k-\alpha+\beta-sp)/2} & |y_{k}|^{-(k+\alpha-\beta-sp)/2} \, dx \,dy  \Bigg)^{\frac{1}{p}} \\ & \geq C \left( \int_{\mathbb{R}^{d}} \frac{|u(x)|^{q}}{|x_{k}|^{\theta + \left( k-sp \right) \frac{q}{p} }} \, dx \right)^{\frac{1}{q}}.
    \end{split}
\end{equation}
Furthermore, define $
\theta' = d + (2s - d) \frac{q'}{2}$.  For any $q'$ satisfying $2<q' \leq \frac{2d}{d-2s}$, and for any $r>1$, we have for any $u \in C^{1}_{c}(\mathbb{R}^{d})$,
\begin{equation}\label{Theorem : Weighted fractional Hardy  sp leq d ineq 2}
  \left(  \int_{\mathbb{R}^{d}} \int_{\mathbb{R}^{d}} \frac{|u(x)-u(y)|^{2}}{|x-y|^{d+2s}} W_{r}(x,y) |x_{k}|^{\alpha} |y_{k}|^{\beta} \, dx \, dy \right)^{\frac{1}{2}} \geq C \left( \int_{\mathbb{R}^{d}} \frac{|u(x)|^{q'}}{|x_{k}|^{\theta' + \left( k-2s \right) \frac{q'}{2} }} \, dx \right)^{\frac{1}{q'}}, 
\end{equation}
where
\begin{equation}\label{Defn: W_r}
    W_{r}(x,y)= \min \left\{ |x_{k}|^{- \frac{k+\alpha+\beta-2s}{r}}, |y_{k}|^{- \frac{k+\alpha+\beta-2s}{r}} \right\} \max \left\{ |x_{k}|^{- \frac{k+\alpha+\beta-2s}{r}}, |y_{k}|^{- \frac{k+\alpha+\beta-2s}{r}} \right\}^{r-1},
\end{equation}
and $C=C(d,s,r,k,\alpha, \beta)$ is a positive constant.
\end{theorem}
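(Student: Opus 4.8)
The plan is to read \eqref{Theorem : Weighted fractional Hardy  sp leq d ineq 1} and \eqref{Theorem : Weighted fractional Hardy  sp leq d ineq 2} as Hardy--Sobolev inequalities for weighted Gagliardo seminorms singular on $K$, and to produce each by Hölder interpolation between a weighted fractional \emph{Hardy} inequality at the low exponent and a weighted fractional \emph{Sobolev} inequality at the critical exponent. For \eqref{Theorem : Weighted fractional Hardy  sp leq d ineq 1}, abbreviate $a:=-(k-\alpha+\beta-sp)/2$ and $b:=-(k+\alpha-\beta-sp)/2$, so the left-hand side is $[u]_{W^{s,p}_{a,b;k}(\mathbb{R}^d)}$ and $a+b=sp-k$; the right-hand weight exponent is $\gamma(q):=\theta+(k-sp)q/p=d+(k-d)q/p$, an affine function of $q$ with $\gamma(p)=k$ and $\gamma(p^*)=(k-sp)p^*/p$, where $p^*=dp/(d-sp)$. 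A one-line scaling check shows both sides transform identically under $u\mapsto u(\cdot/\lambda)$, so the inequality is dilation invariant and it suffices to control the two endpoints $q=p$ and $q=p^*$ and interpolate.

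At $q=p$ I would invoke Theorem~\ref{Theorem : Sharp weighted fractional Hardy} with the weights $(a,b)$: since $sp-a-b=k$ it yields $[u]^p_{W^{s,p}_{a,b;k}}\ge\mathcal C\int_{\mathbb{R}^d}|u|^p|x_k|^{-k}\,dx$ (one first checks $a,b,a+b\in(-k,sp)$, which follows from $\alpha,\beta,\alpha+\beta\in(-k,sp)$). At $q=p^*$ I would establish the critical weighted Sobolev inequality $[u]^p_{W^{s,p}_{a,b;k}}\ge S\big(\int_{\mathbb{R}^d}|u|^{p^*}|x_k|^{-\gamma(p^*)}\,dx\big)^{p/p^*}$. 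Because $\gamma$ is affine and $\theta_1:=(p^*-q)/(p^*-p)$, $\theta_2:=(q-p)/(p^*-p)$ are the barycentric weights of $q$ between $p$ and $p^*$, Hölder's inequality with exponents $1/\theta_1,1/\theta_2$ interpolates the two endpoint weights $|x_k|^{-k}$ and $|x_k|^{-\gamma(p^*)}$ exactly into $|x_k|^{-\gamma(q)}$, and feeding in the two anchors gives \eqref{Theorem : Weighted fractional Hardy  sp leq d ineq 1} for every $p<q\le p^*$. When $sp=d$ there is no finite $p^*$; I would instead interpolate the same Hardy endpoint against the embedding into $L^Q$ with the corresponding weight for an arbitrarily large finite $Q>q$, valid in the borderline regime, so that every $q>p$ is admissible.

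Inequality \eqref{Theorem : Weighted fractional Hardy  sp leq d ineq 2} is the $p=2$ counterpart in which the product weight $|x_k|^a|y_k|^b$ is replaced by the min/max weight $W_r(x,y)|x_k|^\alpha|y_k|^\beta$ of \eqref{Defn: W_r}. This weight is recorded precisely because, under $s\rightsquigarrow sp/2$, $r\rightsquigarrow p$ and $u\rightsquigarrow v^{\langle p/2\rangle}$, it coincides with the remainder weight of Theorem~\ref{Theorem : Sharp weighted fractional Hardy inequality for 1<p< 2 with a remainder} in \eqref{Defn W(x,y) in theorem}; this is how \eqref{Theorem : Weighted fractional Hardy  sp leq d ineq 2} will be applied when deriving the $1<p<2$ Maz'ya inequality. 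I would run the same interpolation between $q'=2$ and $q'=2^*=2d/(d-2s)$ (any finite exponent if $2s=d$). The one new feature is that $W_r$ is a min/max, not a product: I would split each double integral into $\{|x_k|<|y_k|\}$ and $\{|x_k|>|y_k|\}$, on each of which $W_r$ reduces to an honest product of powers, and I would obtain the $q'=2$ endpoint by the nonlinear ground-state method directly, since comparison with a product weight via Theorem~\ref{Theorem : Sharp weighted fractional Hardy} is unavailable once $r<2$.

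The main obstacle in both parts is the critical weighted Sobolev endpoint. The tempting shortcut---substituting $w:=|x_k|^{(sp-k)/p}u$ to turn its right-hand side into the unweighted norm $\|w\|_{L^{p^*}}^p$ and then applying the classical fractional Sobolev inequality---fails, because the model power $|x_k|^{(sp-k)/p}=|x_k|^{\,s-k/p}$ sits exactly at the borderline of $W^{s,p}(\mathbb{R}^d)$ near $K$, so $[w]_{W^{s,p}(\mathbb{R}^d)}$ diverges even when $[u]_{W^{s,p}_{a,b;k}}$ is finite. Hence the weight must be kept, and the endpoint has to be proven as a genuine Stein--Weiss-type fractional Sobolev inequality with the flat-submanifold weight (e.g.\ after symmetrizing in the $x_k$ variable), which is the technical heart of the argument. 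Once this anchor and its $W_r$-analogue are in place, the remaining bookkeeping---verifying $a,b\in(-k,sp)$, the exact weight matching in the Hölder step, the $\{|x_k|\lessgtr|y_k|\}$ splitting, and the $sp=d$ endpoint via all finite $L^Q$---is routine.
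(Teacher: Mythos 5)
Your interpolation scheme collapses at its lower anchor, and this is not a repairable detail. With $a=-(k-\alpha+\beta-sp)/2$ and $b=-(k+\alpha-\beta-sp)/2$ one has $a+b=sp-k$, i.e.\ $k+a+b=sp$ \emph{exactly}. This is precisely the borderline case that Theorem \ref{Theorem : Sharp weighted fractional Hardy} excludes (its hypotheses require either $sp<k+a+b$ or $sp>k+a+b$), and the sharp constant \eqref{The value C} degenerates there: $\mathcal{C}_{1}$ in \eqref{The value C 1} contains the factor $\left|1-r^{(k+a+b-sp)/p}\right|^{p}\equiv 0$, so $\mathcal{C}=0$. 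This is no accident: the inequality you want at $q=p$, namely $[u]^{p}_{W^{s,p}_{a,b;k}(\mathbb{R}^{d})}\geq C\int_{\mathbb{R}^{d}}|u|^{p}|x_{k}|^{-k}\,dx$, is \emph{false} on the class $C^{1}_{c}(\mathbb{R}^{d})$ that the theorem covers: $|x_{k}|^{-k}$ is not locally integrable in the $x_{k}$-variable, so for any $u$ that is nonzero at some point of $K$ the right-hand side is $+\infty$, while the left-hand side is finite whenever $a,b\in(-k,sp)$ (by Lemma 2.1 of \cite{Sahu2025}, since $a+b=sp-k>-k$). The same failure kills your proposed $q'=2$ anchor for \eqref{Theorem : Weighted fractional Hardy  sp leq d ineq 2}, whose weight exponent at $q'=2$ is again $k$. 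This is exactly why the theorem assumes $q>p$ and $q'>2$ strictly. Separately, even the formal invocation of Theorem \ref{Theorem : Sharp weighted fractional Hardy} needs $a,b\in(-k,sp)$, i.e.\ $|\alpha-\beta|<k+sp$; the statement you are proving allows arbitrary $\alpha,\beta\in\mathbb{R}$, so your claim that this ``follows from $\alpha,\beta,\alpha+\beta\in(-k,sp)$'' appeals to hypotheses that are not there.

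The upper anchor is also missing: the critical case $q=p^{*}=dp/(d-sp)$ is itself part of the statement being proven, and you explicitly defer it (``the technical heart of the argument'') without supplying a proof; likewise the $L^{Q}$ embedding invoked when $sp=d$, and both endpoints of the $W_{r}$-version \eqref{Theorem : Weighted fractional Hardy  sp leq d ineq 2} (the ground-state machinery of Section \ref{Sec : General Hardy inequalities} is built for kernels of product form and is never shown to apply to the min/max weight \eqref{Defn: W_r}). So the proposal reduces intermediate exponents to two endpoint inequalities, one of which is false and the other unproven. The paper's proof avoids interpolation altogether: it decomposes the support of $u$ into dyadic pieces $\mathcal{B}^{i}_{\ell}$ on which $|x_{k}|\approx 2^{\ell}$, applies the rescaled fractional Sobolev--Poincar\'e inequality (Lemma \ref{sobolev}) on each piece, and controls the piecewise averages by a telescoping argument (Lemma \ref{Lemma: estimate} together with Lemmas 3.2--3.3 of \cite{Sahu2025}), the outermost average vanishing by compact support. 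That argument treats every admissible $q$, including the critical one, simultaneously, works for arbitrary $\alpha,\beta\in\mathbb{R}$ because only the comparability $|x_{k}|^{c}\approx 2^{\ell c}$ on each piece is used, and never passes through a Hardy inequality.
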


By applying Theorem \ref{Theorem : Sharp weighted fractional Hardy inequality for p geq 2 with a remainder} and  the inequality \eqref{Theorem : Weighted fractional Hardy  sp leq d ineq 1} of Theorem \ref{Theorem : Weighted fractional Hardy  sp leq d}, we derive the weighted fractional Hardy--Sobolev--Maz'ya inequality with singularity on a flat submanifold $K$ of codimension $k$, where $1 \leq k < d$ for the case $p \geq 2$. Let $v(x) = |x_{k}|^{(k + \alpha + \beta - sp)/p}u(x)$. Then, we have 
\begin{equation}
\left( \int_{\mathbb{R}^{d}} \frac{|v(x)|^{q}}{|x_{k}|^{\theta + \left( k-sp \right) \frac{q}{p} }} \, dx \right)^{\frac{1}{q}} = \left( \int_{\mathbb{R}^{d}} |u(x)|^{q} |x_{k}|^{\frac{q}{p}(\alpha+\beta) -\theta} \, dx \right)^{\frac{1}{q}}.
\end{equation}
Applying this transformation in  the inequality \eqref{Theorem : Weighted fractional Hardy  sp leq d ineq 1} of Theorem \ref{Theorem : Weighted fractional Hardy  sp leq d} and subsequently using Theorem \ref{Theorem : Sharp weighted fractional Hardy inequality for p geq 2 with a remainder}, we establish the desired weighted fractional Hardy–Sobolev–Maz'ya inequality with singularity on a flat submanifold $K$ defined in \eqref{k defn} for $p \geq 2$. 

\smallskip

For the case $1<p<2$, we apply the inequality \eqref{Theorem : Weighted fractional Hardy  sp leq d ineq 2} of Theorem \ref{Theorem : Weighted fractional Hardy  sp leq d}, making the substitution $ v(x)= |x_{k}|^{(k+\alpha+\beta-sp)/p}u(x)$, replacing $s$ with $\frac{sp}{2}$ (noting that for $1<p<2$, $\frac{sp}{2} \in (0,1)$), and setting $r=p$. This leads to
\begin{equation*}
\begin{split}
    \int_{\mathbb{R}^{d}} \int_{\mathbb{R}^{d}} \frac{\left( v(x)^{\langle p/2 \rangle}-v(y) )^{\langle p/2 \rangle} \right)^{2}}{|x-y|^{d+sp}} W(x,y) &|x_{k}|^{\alpha} |y_{k}|^{\beta}  \, dx \, dy \\ & \geq C   \left( \int_{\mathbb{R}^{d}} |u(x)|^{q} |x_{k}|^{\frac{q}{p}(\alpha+\beta) -\theta} \, dx \right)^{\frac{p}{q}},
\end{split}
\end{equation*}
where  $p < q \leq \frac{dp}{d - sp}$ for $sp<d$. Using Theorem \ref{Theorem : Sharp weighted fractional Hardy inequality for 1<p< 2 with a remainder} along with the above inequality, we derive the weighted fractional Hardy–Sobolev–Maz'ya inequality for the case $1<p<2$. Therefore, we present the following theorem, which establishes weighted fractional Hardy--Sobolev--Maz'ya inequality with a singularity on $K$ for the case $p>1$:

%---------Theorem 5----------------

\begin{theorem}[Weighted  fractional Hardy--Sobolev--Maz'ya inequality with singularity on $K$]\label{Theorem : Weighted  fractional Hardy--Sobolev--Maz'ya inequality with singularity on K}
Let $d \geq 2$, $p >1$, $s \in (0,1)$, and $1 \leq k < d$ with $k \in \mathbb{N}$. Suppose that $\alpha, \beta \in \mathbb{R}$ satisfy $\alpha, \beta, \alpha + \beta \in (-k, sp)$, and let $\theta = d + (sp - d)\frac{q}{p}$. Let $K$ be the flat submanifold defined in \eqref{k defn}, and assume $p < q \leq \frac{dp}{d - sp}$ if $sp < d$, and $q > p$ if $sp = d$. Then, for any $u \in C^{1}_{c}(\mathbb{R}^{d})$ when $sp < k + \alpha + \beta$, and for any $u \in C^{1}_{c}(\mathbb{R}^{d} \setminus K)$ when $sp > k + \alpha + \beta$, the following inequality holds:
     \begin{equation}
     \begin{split}
     \int_{\mathbb{R}^{d}} \int_{\mathbb{R}^{d}}  \frac{|u(x)-u(y)|^{p}}{|x-y|^{d+sp}}  |x_{k}|^{\alpha} |y_{k}|^{\beta} \, dx \, dy - \mathcal{C} &  \int_{\mathbb{R}^{d}} \frac{|u(x)|^{p}}{|x_{k}|^{sp-\alpha-\beta}} \, dx  \\ & \geq C_{0} \left( \int_{\mathbb{R}^{d}} |u(x)|^{q} |x_{k}|^{ \frac{q}{p}(\alpha+\beta) -\theta} \, dx \right)^{\frac{p}{q}}, 
     \end{split}
    \end{equation}
where the constant $\mathcal{C}(d,s,p,k,\alpha, \beta)>0$ is  given by \eqref{The value C}, and $C_{0} = C_{0}(d, s, p,q, k, \alpha, \beta)$ is a positive constant.
\end{theorem}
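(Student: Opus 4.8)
The plan is to prove the inequality by a two-step ``chaining'' argument, splitting according to whether $p\ge 2$ or $1<p<2$, since the two relevant remainder theorems produce structurally different energies. In each case I would first use the sharp remainder estimate (Theorem~\ref{Theorem : Sharp weighted fractional Hardy inequality for p geq 2 with a remainder} or \ref{Theorem : Sharp weighted fractional Hardy inequality for 1<p< 2 with a remainder}) to bound the Hardy deficit on the left-hand side from below by a weighted nonlocal energy of $v(x)=|x_k|^{(k+\alpha+\beta-sp)/p}u(x)$, and then apply the weighted fractional Hardy--Sobolev inequality of Theorem~\ref{Theorem : Weighted fractional Hardy  sp leq d} to convert that energy into the $L^q$-type quantity on the right-hand side, finally undoing the substitution $v\mapsto u$.

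For $p\ge 2$, I would start from Theorem~\ref{Theorem : Sharp weighted fractional Hardy inequality for p geq 2 with a remainder}, which bounds the deficit below by
\[
c_p \int_{\mathbb{R}^d}\int_{\mathbb{R}^d} \frac{|v(x)-v(y)|^p}{|x-y|^{d+sp}}\,|x_k|^{-(k-\alpha+\beta-sp)/2}|y_k|^{-(k+\alpha-\beta-sp)/2}\,dx\,dy.
\]
This is exactly the left-hand side of \eqref{Theorem : Weighted fractional Hardy  sp leq d ineq 1} evaluated at $v$, whose right-hand side is $\big(\int |v|^q |x_k|^{-\theta-(k-sp)q/p}\big)^{p/q}$. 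Substituting $v=|x_k|^{(k+\alpha+\beta-sp)/p}u$ collapses the power of $|x_k|$ to $\tfrac{q}{p}(\alpha+\beta)-\theta$, exactly as in the displayed identity preceding the theorem. Chaining the two estimates yields the claim with $C_0=c_p C^p$, where $C$ is the constant of \eqref{Theorem : Weighted fractional Hardy  sp leq d ineq 1}.

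For $1<p<2$, I would instead invoke Theorem~\ref{Theorem : Sharp weighted fractional Hardy inequality for 1<p< 2 with a remainder}, whose remainder is the quadratic form in $v^{\langle p/2\rangle}$ weighted by $W(x,y)$ and $|x_k|^{\alpha}|y_k|^{\beta}$. The key structural observation is that $W(x,y)$ in \eqref{Defn W(x,y) in theorem} coincides with $W_r(x,y)$ in \eqref{Defn: W_r} once one replaces $2s$ by $sp$ (equivalently $s$ by $sp/2\in(0,1)$) and sets $r=p$. I would therefore apply \eqref{Theorem : Weighted fractional Hardy  sp leq d ineq 2} with these substitutions to $w:=v^{\langle p/2\rangle}$ and with $q'=2q/p$; a short computation shows $\theta'=\theta$, that $\theta'+(k-2s)q'/2=\theta+(k-sp)q/p$, and that $|w|^{q'}=|v|^q$, so the admissible range $2<q'\le 2d/(d-2s)$ becomes precisely $p<q\le dp/(d-sp)$. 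Undoing the substitution $v=|x_k|^{(k+\alpha+\beta-sp)/p}u$ and chaining with Theorem~\ref{Theorem : Sharp weighted fractional Hardy inequality for 1<p< 2 with a remainder} produces the result with $C_0=C_p C^2$, where $C$ is the constant of \eqref{Theorem : Weighted fractional Hardy  sp leq d ineq 2}.

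The main obstacle I anticipate lies not in the chaining but in justifying that Theorem~\ref{Theorem : Weighted fractional Hardy  sp leq d} may legitimately be applied to $v$ (resp.\ $w=v^{\langle p/2\rangle}$): these functions carry the singular factor $|x_k|^{(k+\alpha+\beta-sp)/p}$ and hence need not belong to $C^1_c(\mathbb{R}^d)$ when $k+\alpha+\beta-sp\ne 0$. I would handle this by a density/approximation argument, truncating $v$ away from $K$ and near infinity and passing to the limit using the finiteness of the weighted Gagliardo energies guaranteed in the admissible parameter range, together with a careful verification that every exponent constraint of Theorem~\ref{Theorem : Weighted fractional Hardy  sp leq d}---in particular $\theta=d+(sp-d)q/p$ and the endpoint $q=dp/(d-sp)$---is inherited under the substitutions. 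Beyond this, the argument reduces to bookkeeping of the weight exponents, which I would arrange so that the two cases share the final combination step.
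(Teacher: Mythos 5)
Your proposal is correct and follows essentially the same route as the paper: the authors also prove the theorem by chaining Theorem~\ref{Theorem : Sharp weighted fractional Hardy inequality for p geq 2 with a remainder} with inequality~\eqref{Theorem : Weighted fractional Hardy  sp leq d ineq 1} when $p\geq 2$, and Theorem~\ref{Theorem : Sharp weighted fractional Hardy inequality for 1<p< 2 with a remainder} with inequality~\eqref{Theorem : Weighted fractional Hardy  sp leq d ineq 2} (replacing $s$ by $sp/2$, taking $r=p$, and in effect $q'=2q/p$) when $1<p<2$, using the same substitution $v(x)=|x_{k}|^{(k+\alpha+\beta-sp)/p}u(x)$ to collapse the weights. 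Your additional concern about applying Theorem~\ref{Theorem : Weighted fractional Hardy  sp leq d} to $v$ (which is not $C^{1}_{c}$) is a technical point the paper passes over silently, and your proposed truncation/approximation argument is a sensible way to address it.
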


The above theorem also presents a weighted fractional version of the result by Barbatis, Filippas, and Tertikas \cite[Theorem C]{Barbatis2004}. The local (unweighted) analogue is 
\begin{equation}\label{localHSM}
\int_{\R^d}|\nabla u(x)|^p\,dx-\left|\frac{k-p}{p}\right|^p\int_{\R^d}\frac{|u(x)|^p}{|x_k|^p}\,dx\geq C\left(\int_{\R^d}|u(x)|^q|x_k|^{-q-d+\frac{dq}{p}}\,dx\right)^{\frac{p}{q}},
\end{equation}
where $1\leq k<d$, $2\leq p<d$ and $p<q\leq dp/(d-p)$. Interestingly, it is an open question whether the inequality \eqref{localHSM} is true for $1<p<2$, while we know that it is the case in the fractional setting. 

\smallskip

Taking $\alpha = \beta = 0$ in Theorem \ref{Theorem : Weighted  fractional Hardy--Sobolev--Maz'ya inequality with singularity on K}, we get back the unweighted form of the fractional Hardy--Sobolev--Maz'ya inequality, where the singularity lies on a flat submanifold $K$ of codimension $k$, with $1 \leq k < d$.

\begin{remark}\label{Remark}
   When $sp<d$ and $q=\frac{dp}{d-sp}$, we get $\theta=0$. Therefore, in this case, we have the following weighted fractional Hardy--Sobolev--Maz'ya inequality:
  \begin{equation}\label{HSMk<d}
   \begin{split}
     \int_{\mathbb{R}^{d}} \int_{\mathbb{R}^{d}}  \frac{|u(x)-u(y)|^{p}}{|x-y|^{d+sp}}  |x_{k}|^{\alpha} |y_{k}|^{\beta} \, dx \, dy - \mathcal{C} &  \int_{\mathbb{R}^{d}} \frac{|u(x)|^{p}}{|x_{k}|^{sp-\alpha-\beta}} \, dx  \\ & \geq C_{0} \left( \int_{\mathbb{R}^{d}} |u(x)|^{q} |x_{k}|^{ \frac{q}{p}(\alpha+\beta)} \, dx \right)^{\frac{p}{q}}.  
     \end{split}
    \end{equation} 
\end{remark}

\smallskip

A weighted fractional Hardy--Sobolev--Maz'ya inequality in a half-space for $1 < p < 2$ was established in \cite[Theorem 4]{dyda2024}, but with certain conditions on parameters $\al,\be$. In our result, Theorem \ref{Theorem : Weighted fractional Hardy--Sobolev--Maz'ya inequality with singularity on K}, we do not impose any such restrictions on these parameters. In particular, we establish a weighted fractional Hardy--Sobolev--Maz'ya inequality with singularity on $K$ for $p > 1$ without any additional constraints on the parameters.

\smallskip

For the local (unweighted) case when  $k = d$, the authors in \cite[Theorem C]{Barbatis2004} proved a Hardy--Sobolev--Maz'ya inequality with a suitable logarithmic weight function. They also showed that this logarithmic weight function is the best possible choice. Specifically, they proved that for any bounded domain $\Omega$ in $\mathbb{R}^{d}$ with $D> \sup_{\Omega} |x| $, there exists a constant $C>0$ such that for all $ u \in W^{1,p}_{0} (\Omega)$,
\begin{equation*}
    \int_{\Omega}|\nabla u(x)|^p\,dx-\left|\frac{d-p}{p}\right|^p\int_{\Omega}\frac{|u(x)|^p}{|x|^p}\,dx\geq C\left(\int_{\Omega}|u(x)|^q|x|^{-q-d+\frac{dq}{p}} X^{1+\frac{q}{p}} \left( \frac{|x|}{D}  \right) \,dx\right)^{\frac{p}{q}},
\end{equation*}
where $X(t)= -1/ \ln t$ for $t \in (0,1)$, $1<p<d$, and $p \leq q < dp/(d-p)$.

\smallskip

To the best of our knowledge, the weighted (and nonweighted) fractional Hardy--Sobolev--Maz'ya inequality for the case $k=d$ (i.e., singularity at the origin) has not been established in the literature. Furthermore, for $k = d$, using our method of proof, we cannot expect a similar type of weighted fractional Hardy--Sobolev--Maz'ya inequality with singularity on $ K$ of codimension $ k $, where $ 1 \leq k < d $, as given in Theorem \ref{Theorem : Weighted fractional Hardy--Sobolev--Maz'ya inequality with singularity on K}. The reason is that Theorem \ref{Theorem : Weighted fractional Hardy sp leq d} does not hold when  $k = d $, because  $\theta + (d-sp)q/p = d$, and the function \( |u(x)|^q|x|^{-d} \) is not integrable if  $u$  has a nonzero constant value near the origin. Moreover, we establish in Theorem \ref{Theorem : HSM fails k=d} that, unlike the case $1 \leq k < d$ discussed in Remark \ref{Remark}, the Hardy--Sobolev--Maz'ya inequality \eqref{HSMk<d} fails for $k = d$.

\smallskip

Building on the weighted fractional Hardy--Sobolev--Maz'ya inequality with a remainder term for the case $k = d$, as given in \cite[Theorem $1.6$]{dyda2022sharp} for $p \geq 2$ and \cite[Theorem $2$]{dyda2024} for $1 < p < 2$, we establish a weighted fractional Hardy--Sobolev--Maz'ya type inequality with a suitable logarithmic weight function. The following theorem presents our result.
\begin{theorem}[Weighted  logarithmic fractional Hardy--Sobolev--Maz'ya inequality]\label{Theorem 6}
   Let $d \geq 1$, $p >1$ and $s \in (0,1)$. Suppose that $\alpha, \beta \in \mathbb{R}$ satisfy $\alpha, \beta, \alpha + \beta \in (-d, sp)$, and let $\theta = d + (sp - d)\frac{q}{p}$. Assume $p \leq q \leq \frac{dp}{d - sp}$ if $sp < d$, and $q \geq p$ if $sp = d$. Then, for any $u \in C^{1}_{c}(\mathbb{R}^{d})$ when $sp < d + \alpha + \beta$, and for any $u \in C^{1}_{c}(\mathbb{R}^{d} \setminus \{ 0\})$ when $sp > d + \alpha + \beta$ such that $\operatorname{supp} u \subset B(0,R)$ for some $R>0$, the following inequality holds:
     \begin{equation}
     \begin{split}
     \int_{\mathbb{R}^{d}} \int_{\mathbb{R}^{d}}  \frac{|u(x)-u(y)|^{p}}{|x-y|^{d+sp}}  |x|^{\alpha} |y|^{\beta} \, dx \, dy - \mathcal{C}_{1} &  \int_{\mathbb{R}^{d}} \frac{|u(x)|^{p}}{|x|^{sp-\alpha-\beta}} \, dx  \\ & \geq C_{0} \left( \bigintssss_{\mathbb{R}^{d}} \frac{|u(x)|^{q}}{ \ln^{q} \left( \frac{4R}{|x|} \right)} |x|^{ \frac{q}{p}(\alpha+\beta) -\theta} \, dx \right)^{\frac{p}{q}},  
     \end{split}
    \end{equation}
where $\mathcal{C}_{1}(d,s,p,\alpha,\beta)$ is the constant given by \eqref{The value C 1}, and $C_{0} = C_{0}(d, s, p,q, \alpha, \beta)$ is a positive constant. 
\end{theorem}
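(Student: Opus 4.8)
The plan is to combine the sharp Hardy inequality with a remainder at codimension $k=d$ (Theorem~\ref{Theorem : Sharp weighted fractional Hardy inequality for p geq 2 with a remainder} for $p\ge2$ and Theorem~\ref{Theorem : Sharp weighted fractional Hardy inequality for 1<p< 2 with a remainder} for $1<p<2$, both of which admit $k=d$; equivalently \cite[Theorem 1.6]{dyda2022sharp} and \cite[Theorem 2]{dyda2024}) with a new \emph{logarithmic fractional Hardy--Sobolev inequality} that plays the role of Theorem~\ref{Theorem : Weighted fractional Hardy  sp leq d}, which degenerates at the critical codimension. Writing $v(x)=|x|^{(d+\alpha+\beta-sp)/p}u(x)$, the remainder inequalities reduce the left-hand side, after subtracting $\mathcal C_1\int|u|^p|x|^{\alpha+\beta-sp}\,dx$, to the weighted Gagliardo energy
\[
J[v]=\int_{\mathbb{R}^d}\int_{\mathbb{R}^d}\frac{|v(x)-v(y)|^p}{|x-y|^{d+sp}}\,|x|^{a}|y|^{b}\,dx\,dy,\qquad a=-\tfrac{d-\alpha+\beta-sp}{2},\quad b=-\tfrac{d+\alpha-\beta-sp}{2},
\]
(for $1<p<2$ one works instead with the energy of $v^{\langle p/2\rangle}$ against the weight $W$ after replacing $s$ by $sp/2$). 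A direct bookkeeping of exponents shows that the target right-hand side equals $C_0\big(\int_{\mathbb{R}^d}|v|^q|x|^{-d}\ln^{-q}(4R/|x|)\,dx\big)^{p/q}$, the weight $|x|^{-d}$ being exactly the critical one for which Theorem~\ref{Theorem : Weighted fractional Hardy  sp leq d} fails. Thus everything reduces to proving
\[
J[v]\ \ge\ C\left(\int_{\mathbb{R}^d}\frac{|v(x)|^q}{\ln^q(4R/|x|)}\,|x|^{-d}\,dx\right)^{p/q}
\]
for $v$ supported in $B(0,R)$.

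To prove this logarithmic inequality I would decompose $B(0,R)$ into dyadic annuli $A_j=\{2^{-j-1}R<|x|\le2^{-j}R\}$, $j\ge0$, on which $\ln(4R/|x|)\approx(j+2)\ln2$ and $\int_{A_j}|x|^{-d}\,dx=O(1)$. The crucial structural fact is that both $J$ and the critical integral $\int|v|^q|x|^{-d}\,dx$ are invariant under the dilation $x\mapsto 2^{-j}R\,x$ carrying $A_j$ onto the fixed annulus $A=\{1/2<|y|\le1\}$: the weight exponents sum to $a+b=sp-d$, which precisely cancels the Jacobian and the scaling of the kernel $|x-y|^{-d-sp}$. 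Hence, setting $c_j=|A_j|^{-1}\int_{A_j}v\,dx$, the scale-invariant fractional Sobolev--Poincaré inequality on $A$ (valid in the stated range of $q$) gives, uniformly in $j$,
\[
\left(\int_{A_j}|v-c_j|^q|x|^{-d}\,dx\right)^{p/q}\ \le\ C\,\mathcal E_j^{\mathrm{loc}},\qquad \mathcal E_j^{\mathrm{loc}}:=\int_{A_j}\int_{A_j}\frac{|v(x)-v(y)|^p}{|x-y|^{d+sp}}|x|^{a}|y|^{b}\,dx\,dy,
\]
while Jensen's inequality together with the comparability of the kernel and of the weights on $A_j\times A_{j+1}$ yields $|c_j-c_{j+1}|^p\le C\,\mathcal E_{j,j+1}$, where $\mathcal E_{j,j+1}$ is the corresponding cross-energy. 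Since the families $\{\mathcal E_j^{\mathrm{loc}}\}$ and $\{\mathcal E_{j,j+1}\}$ are disjoint pieces of the double integral defining $J[v]$, both are dominated by $J[v]$ after summation.

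It remains to assemble the pieces. Using $|v|\le|v-c_j|+|c_j|$ and the subadditivity of $t\mapsto t^{p/q}$ (valid since $p/q\le1$), the target integral raised to the power $p/q$ is bounded by
\[
\sum_{j\ge0}\frac{1}{(j+2)^p}\big(\mathcal E_j^{\mathrm{loc}}+|c_j|^p\big)\ \le\ J[v]+C\sum_{j\ge0}\frac{|c_j|^p}{(j+2)^p}.
\]
The last sum is controlled by the \emph{discrete Hardy inequality} in tail-sum form, $\sum_j(j+2)^{-p}|c_j|^p\le C\sum_j|c_j-c_{j+1}|^p$, which holds precisely because $p>1$ and $c_j\to0$ (guaranteed by the support assumption when $sp>d+\alpha+\beta$, and by $v\to0$ at the origin when $sp<d+\alpha+\beta$); this in turn is $\le C\,J[v]$. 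Here the factor $4R$ is used exactly so that $\ln(4R/|x|)\ge\ln4>0$ throughout $B(0,R)$, making the weight $(j+2)^{-q}$ harmless at small $j$ and legitimizing the discrete Hardy step. I expect the main obstacle to be twofold: establishing the scale-invariant local Sobolev--Poincaré estimate uniformly in $j$ while tracking the anisotropic weights $|x|^a|y|^b$ precisely, and, for $1<p<2$, running the entire scheme for $v^{\langle p/2\rangle}$ against the weight $W$, where the local estimate must be extracted from the $L^2$-based inequality \eqref{Theorem : Weighted fractional Hardy  sp leq d ineq 2} rather than \eqref{Theorem : Weighted fractional Hardy  sp leq d ineq 1}. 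Combining the logarithmic inequality with the remainder inequality of Theorem~\ref{Theorem : Sharp weighted fractional Hardy inequality for p geq 2 with a remainder} (respectively Theorem~\ref{Theorem : Sharp weighted fractional Hardy inequality for 1<p< 2 with a remainder}) then yields the stated inequality.
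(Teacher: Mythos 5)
Your global architecture coincides with the paper's: reduce via the remainder inequalities at $k=d$ (Theorem \ref{Theorem : Sharp weighted fractional Hardy inequality for p geq 2 with a remainder} for $p\ge 2$; Theorem \ref{Theorem : Sharp weighted fractional Hardy inequality for 1<p< 2 with a remainder} with $s\mapsto sp/2$, $r=p$ for $1<p<2$) to a logarithmic Hardy--Sobolev inequality for $v$, and prove the latter by dyadic annuli, a scale-invariant Sobolev--Poincar\'e inequality on each annulus, and control of the averages $c_j$; this is exactly the paper's Theorem \ref{Th : Weighted fractional Hardy k=d}. However, your key discrete step is false. The claimed ``discrete Hardy inequality in tail-sum form'', $\sum_{j\ge 0}(j+2)^{-p}|c_j|^p\le C\sum_{j\ge 0}|c_j-c_{j+1}|^p$ for $p>1$ and $c_j\to 0$, fails: take $c_j=\max(N-j,0)$; the left-hand side is at least $2^{-p}N^p$ while the right-hand side is $CN$, so no constant works as $N\to\infty$. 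The conceptual problem is directional: your weight $(j+2)^{-p}$ is largest on the \emph{outer} annuli ($j$ small), so a Hardy inequality must be anchored where the sequence vanishes at \emph{that} end; you anchor instead at the origin ($j\to\infty$), where the weight is smallest, which is precisely the direction in which Hardy's inequality is false. The gap is not merely formal: the sequence above is realized by an admissible $v$ behaving like $\log_2^+\bigl(|x|/(2^{-N}R)\bigr)$ inside $B(0,R)$ and dropping to $0$ in a thin shell near $|x|=R$; for it, $\sum_{j\ge0}|c_j-c_{j+1}|^p\approx N$ sees only the interior adjacent cross-energies, while $\sum_j(j+2)^{-p}|c_j|^p\approx N^p$, so your chain cannot close no matter how the pieces are reassembled.

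The repair is to anchor at the outer boundary, which is what the hypothesis $\operatorname{supp} u\subset B(0,R)$ is for and what the paper does. Append the annulus $A_{-1}=\{R<|x|\le 2R\}$, on which $v\equiv 0$, hence $c_{-1}=0$; then $c_j=\sum_{i=0}^{j}(c_i-c_{i-1})$ is a partial sum from the vanishing end, and the \emph{classical} discrete Hardy inequality ($p>1$) gives $\sum_{j\ge 0}(j+2)^{-p}|c_j|^p\le C\sum_{j\ge -1}|c_{j+1}-c_j|^p$; the extra term $|c_0-c_{-1}|^p=|c_0|^p$ is paid for by the cross-energy between $A_0$ and the exterior annulus, which is part of $J[v]$ since $J$ integrates over all of $\R^d\times\R^d$. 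The paper implements exactly this (at exponent $q$ rather than $p$) as a weighted telescoping: Lemma \ref{Lemma: estimate} with $c=\left(\frac{n-\ell+1}{n-\ell+1/2}\right)^{q-1}$, summation anchored at $(u)_{\mathcal{A}_{n+1}}=0$, and the asymptotics $(n-\ell+1)^{1-q}-(n-\ell+3/2)^{1-q}\sim (n-\ell+1)^{-q}$, which produce precisely the logarithmic weight. With that single step replaced, the rest of your argument (uniform local Poincar\'e with the weights $|x|^{a}|y|^{b}$, Jensen for adjacent averages, bounded-overlap summation, and the $1<p<2$ variant run for $v^{\langle p/2\rangle}$ against $W$) goes through as in the paper.
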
   

\smallskip

This article is structured as follows. Section \ref{Sec : General Hardy inequalities} provides an overview of general Hardy inequalities and the nonlinear ground state representation, which establishes the sharpness of Theorem \ref{Theorem : Sharp weighted fractional Hardy}. This section also includes the proofs of Theorem \ref{Theorem : Sharp weighted fractional Hardy inequality for p geq 2 with a remainder} and Theorem \ref{Theorem : Sharp weighted fractional Hardy inequality for 1<p< 2 with a remainder}. In Section \ref{Sec : Proof of Theorem sp leq d}, we prove Theorem \ref{Theorem : Weighted fractional Hardy sp leq d}, which plays a key role in deriving the weighted fractional Hardy--Sobolev--Maz'ya inequality stated in Theorem \ref{Theorem : Weighted fractional Hardy--Sobolev--Maz'ya inequality with singularity on K}. Section \ref{The case k=d} addresses the case $k=d$, proving Theorem \ref{Theorem 6}, which extends the weighted fractional Hardy--Sobolev--Maz'ya inequality to point singularities (the origin) with an appropriate logarithmic weight function.

\section{General Hardy inequalities}\label{Sec : General Hardy inequalities}
The derivation of the sharp constant in Theorem \ref{Theorem : Sharp weighted fractional Hardy} and the proof of Theorem \ref{Theorem : Sharp weighted fractional Hardy inequality for p geq 2 with a remainder} and Theorem \ref{Theorem : Sharp weighted fractional Hardy inequality for 1<p< 2 with a remainder} rely on the general fractional Hardy inequalities and the nonlinear ground state representation introduced by Frank and Seiringer \cite{Frank2008}. Following the framework in \cite{Frank2008}, consider a symmetric, non-negative, measurable kernel $k(x,y)$ and a non-empty open set $\Omega \subset \mathbb{R}^{d}$. We define the functional
\begin{equation}
E[u] := \int_{\Omega} \int_{\Omega} |u(x)-u(y)|^{p} k(x,y) \, dy \, dx
\end{equation} 
and 
\begin{equation}
V_{\varepsilon} (x) := 2 \omega(x)^{-p+1} \int_{\Omega} (\omega(x)-\omega(y)) |\omega(x)-\omega(y)|^{p-2} k_{\varepsilon}(x,y) \, dy,
\end{equation}
where $\omega$ is a positive, measurable function on $\Omega$ and $\{k_{\varepsilon}(x,y) \}_{\varepsilon>0}$ is a family of measurable, symmetric kernels satisfying the assumptions $0 \leq k_{\varepsilon}(x,y) \leq k(x,y)$, $\lim_{\varepsilon \to 0} k_{\varepsilon}(x,y) = k(x,y)$ for all $x,y \in \Omega$. Assuming the integrals that define $V_{\varepsilon}(x)$ are absolutely convergent for almost every $x \in \Omega$, as $\ve\rightarrow 0^+$, and that $V_{\varepsilon}$ converges weakly in $L^{1}_{\text{loc}}(\Omega)$ to some function $V$, we obtain the following Hardy-type inequality:
\begin{equation}\label{General Hardy inequality}
E[u] \geq \int_{\Omega} |u(x)|^{p}V(x) \, dx 
\end{equation}
for any compactly supported $u$ satisfying $\int_{\Omega} |u(x)|^{p} V_{+}(x) \, dx < \infty$ (see \cite[Proposition $2.2$]{Frank2008}).

\smallskip

Furthermore, if $p \geq 2$, the inequality above can be refined by incorporating a remainder term defined by
\begin{equation}
E_{\omega}[v] := \int_{\Omega} \int_{\Omega} |v(x)-v(y)|^{p} \omega(x)^{p/2} \omega(y)^{p/2} k(x,y) \, dy \, dx, \hspace{3mm} u=\omega v.
\end{equation}
Specifically, we have
\begin{equation}\label{General Hardy with weight}
E[u] - \int_{\Omega} |u(x)|^{p}V(x) \, dx \geq c_{p} E_{\omega}[v] 
\end{equation} 
under the same conditions as above, where $c_{p}$ is given by \eqref{Value cp}. When $p=2$, the inequality \eqref{General Hardy with weight} becomes an equality.

\smallskip

For the case $1<p<2$, the remainder term was found in \cite{dyda2024} (see also \cite{MR4597627}). Denoting
\begin{equation*} \widetilde{E}_{\omega}[u]:=\int_{\Omega}\int_{\Omega}\left(u(x)^{\langle p/2 \rangle }-u(y)^{\langle p/2 \rangle }\right)^2W(x,y)k(x,y)\,dy\,dx,
\end{equation*}
where 
\begin{equation*}
    W(x,y):=\min\{\omega(x),\omega(y)\}\max\{\omega(x),\omega(y)\}^{p-1}=\omega(x)\omega(y)\max\{\omega(x),\omega(y)\}^{p-2},
\end{equation*}
and
$a^{\langle t \rangle}:=|a|^{t}\text{sgn}(a)$ is the so-called \emph{French power}, we have 
\begin{equation}\label{General Hardy with weight1<p<2}
E[u] - \int_{\Omega} |u(x)|^{p}V(x) \, dx \geq C_{p} \widetilde{E}_{\omega}[v] , \hspace{3mm} u=\omega v,
\end{equation} 
where $C_{p}$ is given by \eqref{The value Cp for 1<p<2}. When $u$ is a nonnegative function, $C_p$ can be taken as $p-1$.

\smallskip

\subsection{Proof of Theorem \ref{Theorem : Sharp weighted fractional Hardy}}
We will now establish the sharpness of the constant in Theorem \ref{Theorem : Sharp weighted fractional Hardy}. First, we prove Theorem \ref{Theorem : Sharp weighted fractional Hardy} using a nonlinear ground state representation and general fractional Hardy inequalities. Then, we illustrate that the obtained constant is sharp.

\smallskip

For any $x \in \mathbb{R}^{d}$, we write $x = (x_{k}, x_{d-k})$, where $x_{k} \in \mathbb{R}^{k}$ and $x_{d-k} \in \mathbb{R}^{d-k}$. Following the ideas in \cite{dyda2024, dyda2022sharp, Frank2008}, we introduce the notation
\begin{equation*}
\gamma=\frac{k+\alpha+\beta-sp}{p}, \hspace{4mm} \omega(x) = |x_{k}|^{- \gamma},
\end{equation*}
\begin{equation*}
k(x,y) = \frac{1}{2} |x-y|^{-d-sp} \left(|x_{k}|^{\alpha}|y_{k}|^{\beta
} + |x_{k}|^{\beta}|y_{k}|^{\alpha}\right).
\end{equation*}
Furthermore, we set
\begin{equation}\label{The value C}
\mathcal{C}(d,s,p,k, \alpha, \beta) =  \frac{\pi^{(d-k)/2}\Gamma \left( \frac{k+sp}{2}  \right)}{\Gamma \left( \frac{d+sp}{2} \right)} \mathcal{C}_{1}(k,s,p, \alpha, \beta) ,
\end{equation}
where $\mathcal{C}_{1}(k,s,p,\alpha,\beta)$ is defined in \eqref{The value C 1}, and $\Gamma$ denotes the Gamma function. We now present the following lemma.
\begin{lemma}\label{Lemma : on flat submanifold}
Let $\alpha,  \beta,  \alpha+ \beta \in (-k,sp)$. Then
\begin{equation}
2 \lim_{\varepsilon \to 0} \int_{||x_{k}|-|y_{k}||> \varepsilon} (\omega(x) - \omega(y))|\omega(x)-\omega(y)|^{p-2}k(x,y) \, dy = \frac{\mathcal{C}(d,s,p,k, \alpha,\beta)}{|x_{k}|^{sp-\alpha-\beta}} \omega(x)^{p-1}
\end{equation}
uniformly on compact sets contained in $\mathbb{R}^{d} \backslash K$. The constant $\mathcal{C}(d,s,p,k, \alpha,\beta)$ is given by \eqref{The value C}.
\end{lemma}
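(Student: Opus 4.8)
The plan is to reduce the $d$-dimensional principal-value integral to the one-dimensional quantity $\mathcal{C}_{1}(k,s,p,\alpha,\beta)$ in two moves: first integrate out the $d-k$ ``flat'' variables $y_{d-k}$, which should produce exactly the Gamma-function prefactor appearing in \eqref{The value C}, and then recognize the remaining $k$-dimensional integral as the potential computation for the point-singularity problem in dimension $k$. The factor $2$ in front and the $\tfrac12$ hidden in $k(x,y)$ cancel, so throughout I work with the symmetric kernel $|x-y|^{-d-sp}\left(|x_{k}|^{\alpha}|y_{k}|^{\beta}+|x_{k}|^{\beta}|y_{k}|^{\alpha}\right)$.

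\emph{First step.} I would fix $x$ with $x_{k}\neq 0$ and fix $\varepsilon>0$. On the region $\{\,||x_{k}|-|y_{k}||>\varepsilon\,\}$ one has $|x_{k}-y_{k}|\geq ||x_{k}|-|y_{k}||>\varepsilon$, so the integrand stays bounded away from the diagonal $y=x$; together with the bounds $\alpha,\beta\in(-k,sp)$ this makes the integral absolutely convergent, and Fubini permits integrating in $y_{d-k}\in\mathbb{R}^{d-k}$ first. Since $\omega$ and both weights depend only on $y_{k}$, this integration touches only the factor $|x-y|^{-d-sp}$, and the identity
\begin{equation*}
\int_{\mathbb{R}^{d-k}}\frac{dz}{\left(|x_{k}-y_{k}|^{2}+|z|^{2}\right)^{(d+sp)/2}}=\pi^{(d-k)/2}\,\frac{\Gamma\left(\frac{k+sp}{2}\right)}{\Gamma\left(\frac{d+sp}{2}\right)}\,|x_{k}-y_{k}|^{-k-sp}
\end{equation*}
(valid since $k+sp>0$) reduces the problem to $\mathbb{R}^{k}$ with kernel $|x_{k}-y_{k}|^{-k-sp}$ and the stated Gamma prefactor.

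\emph{Second step.} After the reduction I am left, up to the prefactor, with
\begin{equation*}
\lim_{\varepsilon\to0}\int_{||x_{k}|-|y_{k}||>\varepsilon}(\omega(x)-\omega(y))|\omega(x)-\omega(y)|^{p-2}|x_{k}-y_{k}|^{-k-sp}\left(|x_{k}|^{\alpha}|y_{k}|^{\beta}+|x_{k}|^{\beta}|y_{k}|^{\alpha}\right)dy_{k},
\end{equation*}
which is precisely the principal-value potential of the point-singularity problem in dimension $k$. To evaluate it I would exploit the homogeneity of $\omega(x)=|x_{k}|^{-\gamma}$: writing $y_{k}=|x_{k}|\eta$ and using rotational invariance to put $x_{k}=|x_{k}|e_{1}$, a bookkeeping of the powers of $|x_{k}|$ produces exactly $\omega(x)^{p-1}|x_{k}|^{-(sp-\alpha-\beta)}$ times a pure constant. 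Passing to polar coordinates $\eta=r\sigma$ and performing the angular integral of $|e_{1}-r\sigma|^{-k-sp}$ over $\mathbb{S}^{k-1}$ yields the kernel $\Phi_{k,s,p}(r)$; finally the substitution $r\mapsto 1/r$ on $(1,\infty)$, together with the reflection identity $\Phi_{k,s,p}(1/r)=r^{k+sp}\Phi_{k,s,p}(r)$, folds the radial integral onto $(0,1)$ and assembles the factor $|1-r^{\gamma}|^{p}$, producing $\mathcal{C}_{1}(k,s,p,\alpha,\beta)$ as in \eqref{The value C 1}. This last computation can also be quoted verbatim from the point-singularity analysis in \cite{dyda2022sharp,dyda2024}. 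Combining with the prefactor of the first step gives \eqref{The value C}.

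\emph{Main obstacle.} The delicate point is the existence of the $\varepsilon\to0$ limit, because the $k$-dimensional integrand is not absolutely convergent near the diagonal: setting $y_{k}=x_{k}+h$ one finds $\omega(x)-\omega(y)=O(|h_{\parallel}|)$ along the radial direction, so the integrand behaves like $|h_{\parallel}|^{p-1}|h|^{-k-sp}$, whose radial integral $\sim\rho^{\,p-2-sp}\,d\rho$ diverges whenever $p(1-s)\leq 1$. Convergence is therefore genuinely a principal-value phenomenon, relying on the odd symmetry in $h_{\parallel}$ that the cut $\{\,||x_{k}|-|y_{k}||>\varepsilon\,\}$ respects. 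Making this cancellation precise, and checking that the limit is uniform on compact subsets of $\mathbb{R}^{d}\setminus K$ (which follows from the homogeneity, since it reduces the limit to a one-parameter statement in $|x_{k}|$), is the technical heart and is handled exactly as in \cite{Frank2008,dyda2022sharp}.
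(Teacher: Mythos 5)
Your proposal is correct and takes essentially the same route as the paper: the paper also first integrates out the $d-k$ flat variables (via the substitution $y_{d-k} \to x_{d-k} + y_{d-k}|x_{k}-y_{k}|$, which is exactly your scaling identity producing the prefactor $\pi^{(d-k)/2}\Gamma\left(\tfrac{k+sp}{2}\right)/\Gamma\left(\tfrac{d+sp}{2}\right)$), and then handles the remaining $k$-dimensional principal value, including the cancellation and uniformity issues you flag, by quoting the point-singularity result \cite[Lemma 3.1]{dyda2022sharp} in $\mathbb{R}^{k}$, just as you suggest.
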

\begin{proof}
We start by observing that
\begin{equation*}
 \int_{||x_{k}|-|y_{k}||> \varepsilon} (\omega(x) - \omega(y))|\omega(x)-\omega(y)|^{p-2}k(x,y) \, dy = |x_{k}|^{\alpha}I_{\varepsilon}(\beta) + |x_{k}|^{\beta} I_{\varepsilon}(\alpha),
\end{equation*}
where
\begin{equation*}
\begin{split}
    I_{\varepsilon}(a) & = \int_{||x_{k}|-|y_{k}||> \varepsilon}  \frac{(|x_{k}|^{-\gamma}-|y_{k}|^{- \gamma}) \left|  |x_{k}|^{-\gamma}-|y_{k}|^{-\gamma} \right|^{p-2} |y_{k}|^{a}}{|x-y|^{d+sp}} \, dy \\ & = \int_{||x_{k}|-|y_{k}||> \varepsilon}  \frac{(|x_{k}|^{-\gamma}-|y_{k}|^{- \gamma}) \left|  |x_{k}|^{-\gamma}-|y_{k}|^{-\gamma} \right|^{p-2} |y_{k}|^{a}}{ \left(|x_{k}-y_{k}|^{2} + |x_{d-k}-y_{d-k}|^{2} \right)^{\frac{d+sp}{2}}} \, dy .
\end{split}
\end{equation*}
By applying the substitution $y_{d-k} \to x_{d-k} + y_{d-k}|x_{k}-y_{k}|$, we obtain
\begin{equation*}
I_{\varepsilon}(a) = \int_{\mathbb{R}^{d-k}} \frac{1}{(1+|y_{d-k}|^{2})^{\frac{d+sp}{2}}}  \, dy_{d-k} \times J_{\varepsilon}(a),
\end{equation*}
where
\begin{equation*}
J_{\varepsilon}(a)= \int_{||x_{k}|-|y_{k}||> \varepsilon}  \frac{(|x_{k}|^{-\gamma}-|y_{k}|^{- \gamma}) \left|  |x_{k}|^{-\gamma}-|y_{k}|^{-\gamma} \right|^{p-2} |y_{k}|^{a}}{|x_{k}-y_{k}|^{k+sp}} \, dy_{k}.
\end{equation*}
Furthermore, let  $\mathbb{S}^{d-k-1}$ be the unit sphere in $\mathbb{R}^{d-k}$. Its measure is given by $|\mathbb{S}^{d-k-1}| = \frac{2 \pi^{(d-k)/2}}{\Gamma \left( \frac{d-k}{2}\right)}$. Using \cite[formula (6.2.1)]{Abramowitz1992}, we obtain
\begin{equation*}
    \int_{\mathbb{R}^{d-k}} \frac{1}{(1+|y_{d-k}|^{2})^{\frac{d+sp}{2}}} \, dy_{d-k} = \frac{\pi^{(d-k)/2} \Gamma \left( \frac{k+sp}{2} \right)}{\Gamma \left( \frac{d+sp}{2} \right)}.
\end{equation*}
Finally, by applying \cite[Lemma 3.1]{dyda2022sharp} in $\mathbb{R}^{k}$, we complete the proof.
\end{proof}

\smallskip

\begin{proof}[Proof of Theorem \ref{Theorem : Sharp weighted fractional Hardy}]
By following the approach in \cite{dyda2022sharp, Frank2008}, we apply the substitution $u(x) = v(x) \omega(x)$ and utilize the inequality \eqref{General Hardy inequality} along with Lemma \ref{Lemma : on flat submanifold}. This allows us to establish that for any $u \in C^{1}_{c}(\mathbb{R}^{d})$ if $sp<k+ \alpha+\beta$, and $u \in C^{1}_{c}(\mathbb{R}^{d} \backslash K)$ if $sp>k+\alpha+\beta$, with the conditions $\alpha, \beta, \alpha+\beta \in (-k,sp)$, the following inequality holds:
\begin{equation}
     \int_{\mathbb{R}^{d}} \int_{\mathbb{R}^{d}} \frac{|u(x)-u(y)|^{p}}{|x-y|^{d+sp}} |x_{k}|^{\alpha} |y_{k}|^{\beta} \, dx \, dy \geq \mathcal{C}   \int_{\mathbb{R}^{d}} \frac{|u(x)|^{p}}{|x_{k}|^{sp-\alpha-\beta}} \, dx,
    \end{equation}
    where $\mathcal{C}$ is given by \eqref{The value C}. Next, we will illustrate the optimality of the constant $\mathcal{C}$. Let
\begin{equation}
\mathfrak{h}^{k,d}_{s,p,\al,\be} := \inf_{u \in X,\,u\neq 0} \frac{\displaystyle\int_{\mathbb{R}^{d}} \int_{\mathbb{R}^{d}} \frac{|u(x)-u(y)|^{p}}{|x-y|^{d+sp}} |x_{k}|^{\alpha} |y_{k}|^{\beta} \, dx \, dy}{\displaystyle\int_{\mathbb{R}^{d}} \frac{|u(x)|^{p}}{|x_{k}|^{sp-\alpha-\beta}} \, dx},
\end{equation}
where $X= C^{1}_{c}(\mathbb{R}^{d})$ if $sp< k+ \alpha+ \beta$, and $X= C^{1}_{c}(\mathbb{R}^{d} \setminus K)$ if $sp> k+ \alpha+ \beta$. Then, we have
\begin{equation}\label{Sharpness 1}
\mathfrak{h}^{k,d}_{s,p,\al,\be} \geq \mathcal{C}(d,s,p,k, \alpha,\beta),
\end{equation}
where $\mathcal{C}(d,s,p,k, \alpha,\beta)$ is defined in \eqref{The value C}. Let $\eta \in C^{\infty}_{c}(\mathbb{R}^{k})$ when $sp-\alpha-\beta<k$, and $\eta \in C^{\infty}_{c}(\mathbb{R}^{k}\setminus K)$ (note that $C^{\infty}_{c}(\mathbb{R}^{k}\setminus K) = C^{\infty}_{c}(\mathbb{R}^{k}\setminus \{ 0 \})$) when $sp-\alpha-\beta>k$, and let $\phi \in C^{\infty}_{c}(\mathbb{R}^{d-k})$. 
For $N>0$, let
\begin{equation*}
\phi_{N}(x_{d-k}) = \frac{N^{\frac{k-d}{p}}}{\| \phi \|_{L^{p}(\mathbb{R}^{d-k})}} \phi \left( \frac{x_{d-k}}{N}  \right).
\end{equation*}
We define the function $u_{N}(x) = \eta(x_{k}) \phi_{N}(x_{d-k}) $. Then, we have
\begin{equation}\label{Sharp ineq 1}
\mathfrak{h}^{k,d}_{s,p,\al,\be} \leq \frac{[\eta \phi_{N}]^{p}_{W^{s,p
}_{ \alpha, \beta;k}(\mathbb{R}^{d})}}{\displaystyle\int_{\mathbb{R}^{d}} \frac{| \eta(x_{k}) \phi_{N}(x_{d-k}) |^{p}}{|x_{k}|^{sp-\alpha-\beta}} dx}  = \frac{[\eta \phi_{N}]^{p}_{{W^{s,p
}_{ \alpha, \beta;k}}(\mathbb{R}^{d})}}{\displaystyle\int_{\mathbb{R}^{k}} \frac{| \eta(x_{k}) |^{p}}{|x_{k}|^{sp-\alpha-\beta}} \, dx_{k}} . 
\end{equation}
Using Minkowski inequality, we get
\begin{equation}\label{Sharp ineq 2}
\begin{split}
[\eta \phi_{N}]_{{W^{s,p
}_{ \alpha, \beta;k}}(\mathbb{R}^{d})} & \leq \left( \int_{\mathbb{R}^{d}} \int_{\mathbb{R}^{d}} \frac{|\phi_{N}(x_{d-k})|^{p} |\eta(x_{k})-\eta(y_{k})|^{p}}{|x-y|^{d+sp}} |x_{k}|^{\alpha} |y_{k}|^{\beta} \, dx \, dy \right)^{\frac{1}{p}} \\ & \hspace{5mm} + \left( \int_{\mathbb{R}^{d}} \int_{\mathbb{R}^{d}} \frac{|\eta(x_{k})|^{p} |\phi_{N}(x_{d-k})-\phi_{N}(y_{d-k})|^{p}}{|x-y|^{d+sp}} |x_{k}|^{\alpha} |y_{k}|^{\beta} \, dx \, dy \right)^{\frac{1}{p}} \\ & =:I_{1}+I_{2}.
\end{split}
\end{equation}
For the integral $I_{1}$, using the substitution $y_{d-k} \to x_{d-k} + y_{d-k}|x_{k}-y_{k}|$, we get
\begin{equation*}
\begin{split}
I^{p}_{1} & =\int_{\mathbb{R}^{d}} \int_{\mathbb{R}^{d}} \frac{|\phi_{N}(x_{d-k})|^{p} |\eta(x_{k})-\eta(y_{k})|^{p}}{|x-y|^{d+sp}} |x_{k}|^{\alpha} |y_{k}|^{\beta} \, dx \, dy \\ & =  \int_{\mathbb{R}^{d-k}} \frac{1}{(1+|y_{d-k}|^{2})^{\frac{d+sp}{2}}} dy_{d-k} \times [\eta]^{p}_{W^{s,p}_{ \alpha, \beta}(\mathbb{R}^{k})} \\ & = \frac{\pi^{(d-k)/2} \Gamma \left( \frac{k+sp}{2} \right)}{\Gamma \left( \frac{d+sp}{2} \right)} \times [\eta]^{p}_{W^{s,p}_{ \alpha, \beta}(\mathbb{R}^{k})}.  
\end{split}
\end{equation*}
For $I_{2}$, we have
\begin{equation*}
\begin{split}
I^{p}_{2} & =  \int_{\mathbb{R}^{d}} \int_{\mathbb{R}^{d}} \frac{|\eta(x_{k})|^{p} |\phi_{N}(x_{d-k})-\phi_{N}(y_{d-k})|^{p}}{|x-y|^{d+sp}} |x_{k}|^{\alpha} |y_{k}|^{\beta} \, dx \, dy\\
&=\int_{\R^{k}}|\eta(x_k)|^p|x_k|^{\al}dx_k\int_{\R^{d-k}}\int_{\R^{d-k}}|\phi_{N}(x_{d-k})-\phi_{N}(y_{d-k})|^{p}|\,dx_{d-k}\,dy_{d-k} \\ & \hspace{5mm} \times \int_{\R^k}\frac{|y_k|^{\be}}{|x-y|^{d+sp}}\,dy_k.
\end{split} 
\end{equation*}
To show that $I^{p}_{2} \to 0$ as $N \to \infty$, we proceed as follows. Let
$$
J:=\int_{\R^k}\frac{|y_k|^{\be}}{|x-y|^{d+sp}}\,dy_k.
$$
Let us first consider the case when $\beta\in (-k,0]$. By Young's inequality, 
\begin{align*}
|x-y|^{d+sp}&=\left(|x_k-y_k|^2+|x_{d-k}-y_{d-k}|^2\right)^{\frac{d+sp}{2}}
\geq C_1|x_k-y_k|^{k-\varepsilon}|x_{d-k}-y_{d-k}|^{d-k+sp+\varepsilon},
\end{align*}
and similarly,
\begin{align*}
|x-y|^{d+sp}&\geq C_2|x_k-y_k|^{k+\varepsilon}|x_{d-k}-y_{d-k}|^{d-k+sp-\varepsilon},
\end{align*}
where $\varepsilon>0$ is sufficiently small. Hence, using the fact that the function $w(x)=|x_k|^\be$ belongs to the Muckenhoupt class $A_1$ \cite[Theorem 1.1]{MR3900847} and the maximal function estimate from \cite{MR312232}, we have for some positive constant $C(d,k,s,p,\varepsilon,\be)$ (which may vary from line to line) that
\begin{align*}
J &\leq C\int_{|x_k-y_k|<1}\frac{|y_k|^{\be}}{|x_k-y_k|^{k-\ve}|x_{d-k}-y_{d-k}|^{d-k+sp+\ve}}\,dy_k\\
&\quad+\int_{|x_k-y_k|>1}\frac{|y_k|^{\be}}{|x_k-y_k|^{k+\ve}|x_{d-k}-y_{d-k}|^{d-k+sp-\ve}}\,dy_k\\
&\leq C|x_k|^{\be}\left(|x_{d-k}-y_{d-k}|^{-(d-k+sp+\ve)}+|x_{d-k}-y_{d-k}|^{-(d-k+sp-\ve)}\right).
\end{align*}
If $\ve$ is sufficiently small, we have $sp-\ve=s_1p$ and $sp+\ve=s_2p$ for some $s_1,\,s_2\in(0,1)$ (these two conditions define $\ve$). Therefore, we obtain
\begin{align*}
I_2^p&\leq C\int_{\R^k}|\eta(x_k)|^p|x_k|^{\al+\be}\,dx_k\left([\phi_N]^p_{W^{s_1,p}(\R^{d-k})}+[\phi_N]^p_{W^{s_2,p}(\R^{d-k})}\right)\\
&\quad=\frac{\displaystyle\int_{\R^k}|\eta(x_k)|^p|x_k|^{\al+\be}\,dx_k}{\|\phi\|^p_{L^p(\R^{d-k})}}\left(\frac{[\phi]^p_{W^{s_1,p}(\R^{d-k})}}{N^{s_1p}}+\frac{[\phi]^p_{W^{s_2,p}(\R^{d-k})}}{N^{s_2p}}\right)\rightarrow0,
\end{align*}
when $N\rightarrow\infty$.

We now turn to the case $\beta\in(0,sp)$. Again by Young's inequality for sufficiently small $\varepsilon>0$,
\begin{align*}
J&\leq\int_{|y_k|\leq|x_k|}\frac{|y_k|^\be}{|x_k-y_k|^{k-\ve}|x_{d-k}-y_{d-k}|^{d-k+sp+\ve}}\,dy_k\\
&\quad+\int_{|y_k|\geq|x_k|}\frac{|y_k|^{\be}}{|x_k-y_k|^{k+sp-\ve}|x_{d-k}-y_{d-k}|^{d-k+\ve}}\,dy_k\\
&\leq C |x_k|^{\be+\ve}|x_{d-k}-y_{d-k}|^{-(d-k+sp+\ve)}+C|x_k|^{\be-sp+\ve}|x_{d-k}-y_{d-k}|^{-(d-k+\ve)}.
\end{align*}
The above estimate leads to
\begin{align*}
 I_2^p&\leq C \frac{1}{N^{s_2p}}\frac{\displaystyle\int_{\R^k}|\eta(x_k)|^p|x_k|^{\al+\be+\ve}dx_k}{\|\phi\|_{L^p(\R^{d-k})}^p}[\phi]^p_{W^{s_2,p}(\R^{d-k})}\\
 &\quad+\frac{1}{N^{\ve}}\frac{\displaystyle\int_{\R^k}|\eta(x_k)|^p|x_k|^{\al+\be-sp+\ve}dx_k}{\|\phi\|_{L^p(\R^{d-k})}^p} [\phi]^p_{W^{\ve/p,p}(\R^{d-k})}\rightarrow 0,
\end{align*}
as $N\rightarrow\infty$. Notice that in both cases we used the fact that $C^{\infty}_c(\R^{d-k})\subset W^{\sigma,p}(\R^{d-k})$ for any $\sigma\in(0,1)$.

By combining $I^{p}_{1}$ and $I^{p}_{2}$ in \eqref{Sharp ineq 2}, utilizing \eqref{Sharp ineq 1}, and letting $N \to \infty$, we deduce that
\begin{equation*}
    \mathfrak{h}^{k,d}_{s,p,\al,\be} \leq \frac{\pi^{(d-k)/2} \Gamma \left( \frac{k+sp}{2} \right)}{\Gamma \left( \frac{d+sp}{2} \right)} \times \frac{[\eta ]^{p}_{W^{s,p}_{ \alpha, \beta}
}(\mathbb{R}^{k})}{\displaystyle\int_{\mathbb{R}^{k}} \frac{| \eta(x_{k}) |^{p}}{|x_{k}|^{sp-\alpha-\beta}} dx_{k}} .
\end{equation*}
Applying the sharpness of $\mathcal{C}_{1}$ in the inequality \eqref{Weighted fractional Hardy : point singularity}, along with the definition of $\mathcal{C}$, we get
\begin{equation*}
   \mathfrak{h}^{k,d}_{s,p,\al,\be} \leq \mathcal{C}(d,s,p,k, \alpha,\beta).
\end{equation*}
This proves the sharpness of the constant in Theorem \ref{Theorem : Sharp weighted fractional Hardy}.
\end{proof}

\subsection{Proof of Theorem \ref{Theorem : Sharp weighted fractional Hardy inequality for p geq 2 with a remainder} and Theorem \ref{Theorem : Sharp weighted fractional Hardy inequality for 1<p< 2 with a remainder}}
Theorem \ref{Theorem : Sharp weighted fractional Hardy inequality for p geq 2 with a remainder} follows directly from Lemma \ref{Lemma : on flat submanifold} and the general fractional Hardy inequality with a remainder for the case $p \geq 2$ (see \eqref{General Hardy with weight}). Similarly, the proof of Theorem \ref{Theorem : Sharp weighted fractional Hardy inequality for 1<p< 2 with a remainder} is obtained using Lemma \ref{Lemma : on flat submanifold} and the general fractional Hardy inequality with a remainder for the case $1<p<2$  (see \eqref{General Hardy with weight1<p<2}).

\section{Proof of Theorem \ref{Theorem : Weighted fractional Hardy  sp leq d}}\label{Sec : Proof of Theorem sp leq d}

In this section, we present the proof of Theorem \ref{Theorem : Weighted fractional Hardy  sp leq d}, where we establish a weighted fractional Hardy--Sobolev inequality with singularities on a flat submanifold of codimension $k$, for $1\leq k<d$ in the case $sp \leq d$. Building upon the ideas from \cite{Sahu2025} with suitable modifications, we prove Theorem \ref{Theorem : Weighted fractional Hardy  sp leq d}.

\smallskip

For a measurable set $\Omega \subset \mathbb{R}^d, ~ (u)_{\Omega}$ will denote the average of the function $u$ over $\Omega$, i.e., 
\begin{equation*}
    (u)_{\Omega} :=  \frac{1}{|\Omega|} \int_{\Omega} u(x) \, dx = \fint_{\Omega} u(x) \, dx. 
\end{equation*}
Here, $|\Omega|$ represents the Lebesgue measure of $\Omega$. For any ~$a_{1},  \dots , a_{m} \in \mathbb{R}$ and ~$\gamma \geq1$, the following inequality holds:
\begin{equation}\label{sumineq}
    \sum_{\ell=1}^{m} |a_{\ell}|^{\gamma} \leq  \left( \sum_{\ell=1}^{m} |a_{\ell}| \right)^{\gamma} .
\end{equation}

\smallskip

The following lemma plays a crucial role in proving Theorem \ref{Theorem : Sharp weighted fractional Hardy inequality for 1<p< 2 with a remainder}. It provides a fractional Sobolev inequality for the case $sp \leq d$, when bounded Lipschitz domains are dilated by a factor $\lambda > 0$. The proof of this result can be found in \cite[Lemma 2.1]{Adi2024} (also in \cite{Adi2023}).

\begin{lemma}\label{sobolev} 
Let $\Omega$ be a bounded Lipschitz domain in $\mathbb{R}^{d}$. Suppose $p > 1$ and $s \in (0,1)$ satisfy $sp \leq d$. For $\lambda > 0$, define the rescaled domain $\Omega_{\lambda}: = \left\{ \lambda x : ~ x \in \Omega \right\}$. Then, there exists a constant $C = C(d,s,p, q, \Omega) > 0$ such that for any function $u \in W^{s, p}(\Omega_{\lambda})$, the following inequality holds:
    \begin{equation}
         \left( \fint_{\Omega_{\lambda}} |u(x)-(u)_{\Omega_{\lambda}}|^{q } dx \right)^{\frac{1}{q}}  \leq C \left( \lambda^{sp-d} [u]^{p}_{W^{s, p}(\Omega_{\lambda})} \right)^{\frac{1}{p}}  ,
    \end{equation}
    where $q \in [p, p^{*}_{s}]$ if $sp<d$, and $q \geq p$ if $sp=d$.
\end{lemma}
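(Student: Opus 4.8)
The plan is to reduce the rescaled inequality to the corresponding estimate on the fixed domain $\Omega$ via the dilation $y \mapsto \lambda y$, and then to invoke the standard fractional Sobolev--Poincaré inequality there. Concretely, given $u \in W^{s,p}(\Omega_\lambda)$, I would introduce $v(y) := u(\lambda y)$ for $y \in \Omega$, and track how each of the three quantities in the statement transforms under this change of variables. This turns the $\lambda$-dependent inequality into a $\lambda$-free one on $\Omega$, with all the scaling collected into an explicit power of $\lambda$.

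A direct computation with $x = \lambda y$ shows that the mean value is preserved, $(u)_{\Omega_\lambda} = (v)_\Omega$, so the left-hand side is scale invariant:
\begin{equation*}
\fint_{\Omega_\lambda} |u(x) - (u)_{\Omega_\lambda}|^q \, dx = \fint_\Omega |v(y) - (v)_\Omega|^q \, dy.
\end{equation*}
Applying the same substitution to the double integral defining the Gagliardo seminorm (with $x = \lambda y$, $x' = \lambda y'$, which produces a factor $\lambda^{2d}$ from the two measures and $\lambda^{-(d+sp)}$ from the kernel $|x-x'|^{-(d+sp)}$) yields the key scaling identity
\begin{equation*}
[u]^p_{W^{s,p}(\Omega_\lambda)} = \lambda^{d-sp} [v]^p_{W^{s,p}(\Omega)}, \qquad \text{equivalently} \qquad [v]^p_{W^{s,p}(\Omega)} = \lambda^{sp-d} [u]^p_{W^{s,p}(\Omega_\lambda)}.
\end{equation*}

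With these two relations in hand, it remains to apply the fractional Sobolev--Poincaré inequality on the fixed Lipschitz domain $\Omega$ to $v$: for $q$ in the admissible range there is a constant $C = C(d,s,p,q,\Omega)$ with
\begin{equation*}
\left( \fint_\Omega |v - (v)_\Omega|^q \, dy \right)^{1/q} \leq C \left( [v]^p_{W^{s,p}(\Omega)} \right)^{1/p}.
\end{equation*}
Substituting the two scaling relations into this estimate reproduces exactly the claimed bound, the power $\lambda^{sp-d}$ appearing on the right precisely as stated. The constant depends only on $d,s,p,q$ and the fixed domain $\Omega$, since the dilation introduces no further $\lambda$-dependence beyond the explicit power already extracted.

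The only genuine input is the base inequality on $\Omega$, which is where the Lipschitz hypothesis and the range restriction on $q$ enter, and which I regard as the main obstacle (the scaling step above is then pure bookkeeping). I would obtain it in the standard way: extend $v$ to $\mathbb{R}^d$ through a bounded extension operator $W^{s,p}(\Omega) \to W^{s,p}(\mathbb{R}^d)$, available for bounded Lipschitz domains; apply the fractional Sobolev embedding $W^{s,p}(\mathbb{R}^d) \hookrightarrow L^{p^*_s}(\mathbb{R}^d)$ when $sp<d$ (and the embedding into every $L^q$ with $q \geq p$ when $sp=d$); and upgrade this to the averaged, mean-subtracted form by a compactness--contradiction argument using the compact embedding of $W^{s,p}(\Omega)$ into $L^p(\Omega)$. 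Alternatively, since this base estimate is precisely the content of \cite[Lemma 2.1]{Adi2024}, one may simply invoke it and carry out only the dilation computation above.
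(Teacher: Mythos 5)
Your argument is correct, but there is nothing in the paper to compare it against: the paper does not prove this lemma at all, it simply defers to \cite[Lemma 2.1]{Adi2024} (see also \cite{Adi2023}). Your route --- rescale via $x=\lambda y$, note that the averaged left-hand side is invariant and that the Gagliardo seminorm transforms as $[u]^p_{W^{s,p}(\Omega_\lambda)}=\lambda^{d-sp}[v]^p_{W^{s,p}(\Omega)}$, then apply the fractional Sobolev--Poincar\'e inequality on the fixed domain $\Omega$ --- is the standard proof of such a statement, and your scaling identities are computed correctly. Your sketch of the base estimate also goes through, including at the critical exponent $q=p^*_s$, where compactness of $W^{s,p}(\Omega)\hookrightarrow L^p(\Omega)$ is not available in $L^{q}$ itself: normalizing $w_n=(v_n-(v_n)_\Omega)/\|v_n-(v_n)_\Omega\|_{L^q(\Omega)}$, one only needs $w_n\to 0$ in $L^p(\Omega)$ (plus $[w_n]_{W^{s,p}(\Omega)}\to 0$), after which the extension--embedding inequality $\|w_n\|_{L^q(\Omega)}\leq C\bigl(\|w_n\|_{L^p(\Omega)}+[w_n]_{W^{s,p}(\Omega)}\bigr)$ forces $\|w_n\|_{L^q(\Omega)}\to 0$, contradicting the normalization; connectedness of the Lipschitz domain is what makes ``zero seminorm implies constant'' valid. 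One small imprecision: you call the $\lambda$-free estimate on $\Omega$ ``precisely the content'' of \cite[Lemma 2.1]{Adi2024}, but that cited lemma is the full statement with the $\lambda^{sp-d}$ factor (it is exactly what the paper quotes), so invoking it makes your dilation computation redundant rather than complementary --- harmless, but worth noting.
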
 

\smallskip

We now state a fundamental inequality, the proof of which is available in \cite[Lemma 2.5]{Adi2024} (also in \cite{Adi2023}).

\begin{lemma}\label{Lemma: estimate}
    Let $q >1$ and $c>1$. Then for all $a, b \in \mathbb{R}$, we have 
    \begin{equation}
        (|a| + |b|)^{q} \leq c|a|^{q} + (1-c^{-\frac{1}{q -1}})^{1-q} |b|^{q} .
    \end{equation}
\end{lemma}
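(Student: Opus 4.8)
The plan is to reduce the claimed inequality to a one-line application of the convexity of the map $t\mapsto t^q$, once the correct splitting weight has been identified. Since both sides depend only on $|a|$ and $|b|$, I would first replace $a,b$ by $|a|,|b|$ and assume without loss of generality that $a,b\geq 0$. The degenerate case $b=0$ is then immediate because $c>1$ forces $c|a|^q\geq|a|^q$, so the substance of the argument concerns the generic situation $a,b\geq 0$.

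The key observation is that for any weight $\theta\in(0,1)$ one can write $a+b=\theta\cdot\frac{a}{\theta}+(1-\theta)\cdot\frac{b}{1-\theta}$, a convex combination of $a/\theta$ and $b/(1-\theta)$. Applying Jensen's inequality (that is, the convexity of $t\mapsto t^q$ on $[0,\infty)$ for $q>1$) then yields
\[
(a+b)^q\leq\theta\left(\frac{a}{\theta}\right)^q+(1-\theta)\left(\frac{b}{1-\theta}\right)^q=\theta^{1-q}a^q+(1-\theta)^{1-q}b^q.
\]
It then remains only to choose $\theta$ so that the first coefficient equals $c$. Solving $\theta^{1-q}=c$ gives $\theta=c^{-1/(q-1)}$, which lies in $(0,1)$ precisely because $c>1$; this is exactly the quantity appearing in the statement. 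With this choice the second coefficient becomes $(1-\theta)^{1-q}=\bigl(1-c^{-1/(q-1)}\bigr)^{1-q}$, matching the claimed constant, and the estimate follows.

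The only genuine subtlety, what one might call the main obstacle, is locating the optimal weight $\theta=c^{-1/(q-1)}$: any other admissible weight produces a valid but strictly weaker inequality. As an independent cross-check I would divide through by $b^q$ (for $b>0$), set $t=a/b$, and study $f(t):=ct^q+(1-c^{-1/(q-1)})^{1-q}-(t+1)^q$ on $[0,\infty)$. A short computation shows that $f$ has a unique critical point at $t_0=c^{-1/(q-1)}/\bigl(1-c^{-1/(q-1)}\bigr)$, where $f(t_0)=0$; since $f'(0^+)<0$ and $f(t)\to+\infty$ as $t\to\infty$, this point is the global minimum, confirming $f\geq 0$ and, as a byproduct, that the constant is sharp.
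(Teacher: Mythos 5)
Your proof is correct. Note that the paper itself does not prove this lemma at all --- it simply cites \cite[Lemma 2.5]{Adi2024} --- so your argument serves as a complete, self-contained justification: the convex-combination trick $a+b=\theta\,(a/\theta)+(1-\theta)\,(b/(1-\theta))$ with the optimal weight $\theta=c^{-1/(q-1)}$ is the standard way to obtain exactly this pair of constants, and your verification that $\theta\in(0,1)$ (using $c>1$, $q>1$) together with the computation $(1-\theta)^{1-q}=(1-c^{-1/(q-1)})^{1-q}$ is all that is needed. The cross-check via $f(t)=ct^q+(1-c^{-1/(q-1)})^{1-q}-(t+1)^q$, showing the unique critical point $t_0=c^{-1/(q-1)}/(1-c^{-1/(q-1)})$ satisfies $f(t_0)=0$, is a nice bonus establishing sharpness of the constant, though it is not required by the statement.
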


\bigskip

Fix $1 \leq k < d, ~ k \in \mathbb{N}$. Let $ \mathcal{D}:= B^{k}(0, 2^{n_{0}+1}) \times (-2^{n_{1}},2^{n_{1}})^{d-k}$, where $n_{0}, n_{1} \in \mathbb{Z}$ and $2^{n_{1}} \geq 2^{n_{0}+1}$. Here, $B^{k}(0, 2^{n_{0}+1})$ denotes a ball centered at $0$ with radius $ 2^{n_{0}+1}$ in $\mathbb{R}^{k}$. Let $x = (x_{k}, x_{d-k}) \in \mathbb{R}^{k} \times \mathbb{R}^{d-k}$, where $x_{k} \in \mathbb{R}^k$ and $x_{d-k} \in \mathbb{R}^{d-k}$. Let $\theta = d+ \left(sp-d \right)\frac{ q }{p}$, where we fix $q > p$ when $ sp=d$, and $q \in \left(p, \frac{dp}{d-sp} \right] $ when $sp<d$. For each $\ell \leq n_{0}$, we define
\begin{equation*}
    \mathcal{B}_{\ell} = \{ x=(x_{k},x_{d-k}) \in \mathcal{D} :  2^{ \ell}  \leq |x_{k}| < 2^{\ell +1}  \}.
\end{equation*}
Therefore, we have
\begin{equation*}
    \mathcal{D} = \bigcup_{\ell= - \infty}^{n_{0}} \mathcal{B}_{\ell}.
\end{equation*}
Again, we further divide $\mathcal{B}_{\ell}$ into a disjoint union of sets of the same size, denoted as $\mathcal{B}^{i}_{\ell}$, such that if $x = (x_{k}, x_{d-k}) \in \mathcal{B}^{i}_{\ell}$, then $x_{d-k} \in C^{i}_{\ell}$, where $C^{i}_{\ell}$  is a cube of side length $2^{ \ell}$ in $\mathbb{R}^{d-k}$. Then, we have
\begin{equation*}
      \mathcal{B}_{\ell} = \bigcup_{i = 1}^{\sigma_{\ell}} \mathcal{B}^{i}_{\ell}  ,
\end{equation*}
where $\sigma_{\ell} = 2^{(- \ell+1)(d-k)} 2^{n_{1}(d-k)}$. The Lebesgue measure of $\mathcal{B}^{i}_{\ell}$ is then given by
\begin{equation*}
    |\mathcal{B}^{i}_{\ell}| = \left( \frac{|\mathbb{S}^{k-1}| (2^{k}-1)  2^{\ell k}}{k} \right) \times 2^{\ell (d-k)} =  \frac{|\mathbb{S}^{k-1}| (2^{k}-1)  2^{\ell d}}{k} .
\end{equation*}
Here, $\mathbb{S}^{k-1}$ represents the boundary of an open ball of radius $1$ with center $0$ in $\mathbb{R}^{k}$.

\smallskip

\begin{proof}[\textbf{Proof of Theorem \ref{Theorem : Weighted fractional Hardy  sp leq d}}]
Let $u \in C^{1}_{c}(\mathbb{R}^{d})$ be such that
\begin{equation*}
\operatorname{supp} \,  u \subset B^{k}(0, 2^{n_{0}+1}) \times (-2^{n_{1}},2^{n_{1}})^{d-k}= \mathcal{D}
\end{equation*}
for some $n_{0}, n_{1} \in \mathbb{Z}$ satisfying $2^{n_{1}} \geq 2^{n_{0}+1}$. Fix any  $\mathcal{B}^{i}_{\ell}$. By applying Lemma \ref{sobolev} with the domain $\Omega = \{ (x_{k}, x_{d-k}) : 1 < |x_{k}| < 2  \ \text{and} \ x_{d-k} \in (1,2)^{d-k} \}$, $ \lambda = 2^{\ell}$, and using translation invariance, we have
\begin{equation}\label{eqn1}
 \fint_{\mathcal{B}^{i}_{\ell}} |u(x)-(u)_{\mathcal{B}^{i}_{\ell}}|^{q}  dx \leq C 2^{\ell \left(sp-d \right) \frac{q}{p} }[u]^{q}_{W^{s, p}(\mathcal{B}^{i}_{\ell})}  ,
\end{equation}
where $C= C(s,p,q)$ is a positive constant. For $x = (x_{k},x_{d-k}) \in \mathcal{B}^{i}_{\ell}$, we have $$C2^{\ell \left( \theta + \left( k-sp \right) \frac{q}{p} \right)} \leq |x_{k}|^{\theta + \left( k-sp \right) \frac{q}{p}}$$ for some positive constant $C=C(d,s,p,q,k,\alpha, \beta)$. Therefore, we have
\begin{equation*}
\begin{split}
    \int_{\mathcal{B}^{i}_{\ell}} \frac{|u(x)|^{q}}{|x_{k}|^{\theta + \left( k-sp \right) \frac{q}{p} }}  dx &\leq \frac{C}{2^{\ell \left(\theta + \left( k-sp \right) \frac{q}{p}\right)}} \int_{\mathcal{B}^{i}_{\ell}} |u(x)-(u)_{\mathcal{B}^{i}_{\ell}} + (u)_{\mathcal{B}^{i}_{\ell}}|^{q}  dx \\
    &\leq \frac{C}{2^{\ell \left( \theta + \left( k-sp \right) \frac{q}{p} \right)}} \int_{\mathcal{B}^{i}_{\ell}} |u(x)-(u)_{\mathcal{B}^{i}_{\ell}}|^{q}  dx + \frac{C}{2^{\ell \left( \theta + \left( k-sp \right) \frac{q}{p} \right)}} \int_{\mathcal{B}^{i}_{\ell}} |(u)_{\mathcal{B}^{i}_{\ell}}|^{q}dx.
\end{split}
\end{equation*}
  By applying inequality \eqref{eqn1} along with the observation that for any $\alpha, \beta \in \mathbb{R}$,
\begin{equation*}
2^{\ell \left( -k+sp \right)}= 2^{\ell (-k+\alpha-\beta + sp)/2 } 2^{\ell (-k-\alpha+\beta + sp)/2 } \leq C |x_{k}|^{-(k-\alpha+\beta-sp)/2} |y_{k}|^{-(k+\alpha-\beta-sp)/2}
\end{equation*}  
for some positive constant $C=C(s,p,k,\alpha,\beta)$, for all $(x_{k}, x_{d-k}), (y_{k}, y_{d-k}) \in \mathcal{B}^{i}_{\ell}$, and utilizing the definition of $\theta$, we obtain
    \begin{equation*}
    \begin{split}
        \int_{\mathcal{B}^{i}_{\ell}} \frac{|u(x)|^{q}}{|x_{k}|^{\theta + \left( k-sp \right) \frac{q}{p}}}  dx &\leq C \frac{|\mathcal{B}^{i}_{\ell}|}{2^{\ell \left(\theta + \left( k-sp \right) \frac{q}{p}\right)}} \fint_{\mathcal{B}^{i}_{\ell}} |u(x)-(u)_{\mathcal{B}^{i}_{\ell}}|^{q}  dx  + C \frac{|\mathcal{B}^{i}_{\ell}|}{2^{\ell \left(\theta + \left( k-sp \right) \frac{q}{p}\right)}} |(u)_{\mathcal{B}^{i}_{\ell}}|^{q} \\
   & \leq  C 2^{\ell \left( -k+sp \right) \frac{q}{p}} [u]^{q}_{W^{s, p}(\mathcal{B}^{i}_{\ell})}  + C 2^{\ell \left( d-k \right) \frac{q}{p}} |(u)_{\mathcal{B}^{i}_{\ell}}|^{q} \\ &\leq  C \left( \int_{\mathcal{B}^{i}_{\ell}} \int_{\mathcal{B}^{i}_{\ell}} \frac{|u(x)-u(y)|^{p}}{|x-y|^{d+sp}} |x_{k}|^{-(k-\alpha+\beta-sp)/2} |y_{k}|^{-(k+\alpha-\beta-sp)/2} \, dx \, dy \right)^{\frac{q}{p}} \\ & \hspace{5mm} + C 2^{\ell \left( d-k \right) \frac{q}{p}} |(u)_{\mathcal{B}^{i}_{\ell}}|^{q},
    \end{split}
\end{equation*}   
where $C= C(d,s,p,q,k, \alpha, \beta)$ is a positive constant. Summing the above inequality from $i=1$ to $\sigma_{\ell}$, and then from  $\ell= m \in \mathbb{Z}^{-}$ to $n_{0}$, and applying \eqref{sumineq} with $\gamma= \frac{q}{p}$, we obtain
\begin{equation}\label{eqnn1}
\begin{split}
\sum_{\ell=m}^{n_{0}} & \int_{\mathcal{B}_{\ell}} \frac{|u(x)|^{q}}{|x_{k}|^{\theta + \left( k-sp \right) \frac{q}{p} }} dx \\ & \leq C \sum_{\ell=m}^{n_{0}}  \left( \int_{\mathcal{B}_{\ell}} \int_{\mathcal{B}_{\ell}} \frac{|u(x)-u(y)|^{p}}{|x-y|^{d+sp}} |x_{k}|^{-(k-\alpha+\beta-sp)/2} |y_{k}|^{-(k+\alpha-\beta-sp)/2} \, dx \, dy \right)^{\frac{q}{p}} \\ & \hspace{5mm} + C \sum_{\ell=m}^{n_{0}} 2^{\ell \left( d-k \right) \frac{q}{p}}  \sum_{i=1}^{\sigma_{\ell}} |(u)_{\mathcal{B}^{i}_{\ell}}|^{q}.
\end{split}
\end{equation} 
Consider the disjoint sets $\mathcal{B}^{j}_{\ell+1}$ and $\mathcal{B}^{i}_{\ell}$, where $\mathcal{B}^{j}_{\ell+1}$ is chosen so that $C^{i}_{\ell}$ is contained in $C^{j}_{\ell+1}$. In other words,   
\begin{equation}\label{codn on Al and Al+1}
    \mathcal{B}^{i}_{\ell} \subset \{ (x_{k}, x_{d-k}) \in \mathcal{B}_{\ell} : x_{d-k} \in C^{j}_{\ell +1}   \}.
\end{equation}
Moreover, there exist $2^{d-k}$ such sets $\mathcal{B}^{i}_{\ell}$ satisfying this condition. Applying \cite[Lemma 3.2]{Sahu2025}, we obtain  
\begin{equation}\label{estimate on B_l and B_l+1}
 |(u)_{\mathcal{B}^{i}_{\ell}} - (u)_{\mathcal{B}^{j}_{\ell+1}} |^{q} \leq C 2^{\ell \left(sp-d \right) \frac{q}{p}} [u]^{q}_{W^{s, p}(\mathcal{B}^{i}_{\ell} \cup \mathcal{B}^{j}_{\ell+1})},
    \end{equation}
where $C$ is a positive constant independent of $\ell$. Using triangle inequality, we have 
\begin{equation*}
    |(u)_{\mathcal{B}^{i}_{\ell}}|^q \leq  \left( |(u)_{\mathcal{B}^{j}_{\ell+1}}| + |(u)_{\mathcal{B}^{i}_{\ell}} - (u)_{\mathcal{B}^{j}_{\ell+1}}| \right)^q.
\end{equation*}
We know that $k<d$ and $q>p$. Hence, applying Lemma \ref{Lemma: estimate} with $c :=c_{1}2^{(d-k)\frac{q}{p} -(d-k) }>1$ where $c_{1} = \frac{2}{1+ 2^{(d-k)\frac{q}{p} -(d-k)}} <1$ together with \eqref{estimate on B_l and B_l+1}, we obtain
\begin{equation*}
    |(u)_{\mathcal{B}^{i}_{\ell}}|^{q} \leq  c_{1}2^{(d-k)\frac{q}{p} -(d-k) } |(u)_{\mathcal{B}^{j}_{\ell+1}}|^{q} + C 2^{\ell(sp-d)\frac{q}{p}}  [u]^{q}_{W^{s, p}(\mathcal{B}^{i}_{\ell} \cup \mathcal{B}^{j}_{\ell+1})}  .
\end{equation*}
Multiplying the above inequality by $2^{\ell \left( d-k \right) \frac{q}{p}}$, we get
\begin{equation*}
   2^{\ell \left( d-k \right) \frac{q}{p}} |(u)_{\mathcal{B}^{i}_{\ell}}|^{q} \leq c_{1} 2^{(\ell+1) \left( \left( d-k \right) \frac{q}{p} \right) - (d-k)}|(u)_{\mathcal{B}^{j}_{\ell+1}}|^{q} +  C 2^{\ell \left( -k+sp \right) \frac{q}{p}} [u]^{p}_{W^{s, p}(\mathcal{B}^{i}_{\ell} \cup \mathcal{B}^{j}_{\ell+1})}  .
\end{equation*}
Since there are $2^{d-k}$ such sets $\mathcal{B}^{i}_{\ell}$ satisfying \eqref{codn on Al and Al+1}, we sum the above inequality over $i$ from $2^{d-k}(j-1)+1$ to $2^{d-k}j$. Applying the relation
\begin{equation*}
2^{\ell \left( -k+sp \right)}= 2^{\ell (-k+\alpha-\beta + sp)/2 } 2^{\ell (-k-\alpha+\beta + sp)/2 } \leq C |x_{k}|^{-(k-\alpha+\beta-sp)/2} |y_{k}|^{-(k+\alpha-\beta-sp)/2}
\end{equation*} 
for some positive constant $C=C(s,p,k,\alpha,\beta)$, valid for all $(x_{k}, x_{d-k}), (y_{k}, y_{d-k}) \in \mathcal{B}^{i}_{\ell} \cup \mathcal{B}^{j}_{\ell+1}$, we then sum from $j=1$ to $\sigma_{\ell+1}$ and apply \eqref{sumineq}  with $\gamma= \frac{q}{p}$ to obtain
\begin{equation*}\label{ineqn100}
\begin{split}
   & 2^{\ell \left( d-k \right) \frac{q}{p}} \sum_{i=1}^{\sigma_{\ell}} |(u)_{\mathcal{B}^{i}_{\ell}}|^{q} \\ & \leq c_{1} 2^{(\ell+1) \left( \left( d-k \right) \frac{q}{p} \right)} \sum_{j=1}^{\sigma_{\ell+1}} |(u)_{\mathcal{B}^{j}_{\ell+1}}|^{q} \\ & \hspace{4mm}   + \ C \left( \int_{\mathcal{B}_{\ell} \cup \mathcal{B}_{\ell+1}} \int_{\mathcal{B}_{\ell} \cup \mathcal{B}_{\ell+1}} \frac{|u(x)-u(y)|^{p}}{|x-y|^{d+sp}} |x_{k}|^{-(k-\alpha+\beta-sp)/2} |y_{k}|^{-(k+\alpha-\beta-sp)/2} \, dx \, dy \right)^{\frac{q}{p}}.
\end{split}
\end{equation*}
Next, summing the resulting inequality from $\ell = m \in \mathbb{Z}^{-}$ to $n_{0}$, and then rearranging terms with appropriate re-indexing, we obtain
\begin{equation*}
\begin{split}
 &  2^{m \left( d-k \right) \frac{q}{p}} \sum_{i=1}^{\sigma_{m}} |(u)_{\mathcal{B}^{i}_{m}}|^{q} + (1-c_{1}) \sum_{\ell=m+1}^{n_{0}} 2^{\ell \left( d-k \right) \frac{q}{p}} \sum_{i=1}^{\sigma_{\ell}} |(u)_{\mathcal{B}^{i}_{\ell}}|^{q}  \\ & \leq  C \sum_{j=1}^{\sigma_{n_{0}+1}} |(u)_{\mathcal{B}^{j}_{n_{0}+1}}|^{q} 
    \\ & \hspace{3mm} +   C \sum_{\ell=m}^{n_{0}} \left( \int_{\mathcal{B}_{\ell} \cup \mathcal{B}_{\ell+1}} \int_{\mathcal{B}_{\ell} \cup \mathcal{B}_{\ell+1}} \frac{|u(x)-u(y)|^{p}}{|x-y|^{d+sp}} |x_{k}|^{-(k-\alpha+\beta-sp)/2} |y_{k}|^{-(k+\alpha-\beta-sp)/2} \, dx \, dy \right)^{\frac{q}{p}} .
    \end{split}
\end{equation*}
Since $\operatorname{supp} \, u \subset \mathcal{D}$, this implies that $|(u)_{\mathcal{B}^{j}_{n_{0}+1}}|=0$ for all $j \in \{ 1, \dots, \sigma_{n_{0}+1} \}$. Hence, we have for some constant $C= C(d,s,p,q,k, \alpha, \beta)>0$,
\begin{equation}\label{eqnn99}
\begin{split}
   & (1-c_{1})  \sum_{\ell=m}^{n_{0}} 2^{\ell \left( d-k \right) \frac{q}{p}} \sum_{i=1}^{\sigma_{\ell}} |(u)_{\mathcal{B}^{i}_{\ell}}|^{q}  \\ & \leq    C \sum_{\ell=m}^{n_{0}} \left( \int_{\mathcal{B}_{\ell} \cup \mathcal{B}_{\ell+1}} \int_{\mathcal{B}_{\ell} \cup \mathcal{B}_{\ell+1}} \frac{|u(x)-u(y)|^{p}}{|x-y|^{d+sp}} |x_{k}|^{-(k-\alpha+\beta-sp)/2} |y_{k}|^{-(k+\alpha-\beta-sp)/2} \, dx \, dy \right)^{\frac{q}{p}}.
    \end{split}
\end{equation}
Combining \eqref{eqnn1} and \eqref{eqnn99} yields
\begin{equation*}
\begin{split}
   & \sum_{\ell=m}^{n_{0}}  \int_{\mathcal{B}_{\ell}} \frac{|u(x)|^{q}}{|x_{k}|^{\theta + \left( k-sp \right) \frac{q}{p} }} \, dx \\ & \leq  C \sum_{\ell=m}^{n_{0}} \left( \int_{\mathcal{B}_{\ell} \cup \mathcal{B}_{\ell+1}} \int_{\mathcal{B}_{\ell} \cup \mathcal{B}_{\ell+1}} \frac{|u(x)-u(y)|^{p}}{|x-y|^{d+sp}} |x_{k}|^{-(k-\alpha+\beta-sp)/2} |y_{k}|^{-(k+\alpha-\beta-sp)/2} \, dx \, dy \right)^{\frac{q}{p}}.
    \end{split}
\end{equation*}
 By applying \eqref{sumineq} with $\gamma= \frac{q}{p}$ to the right-hand side of the above inequality and utilizing \cite[Lemma 3.3]{Sahu2025}, we conclude the first part of the proof of Theorem \ref{Theorem : Weighted fractional Hardy  sp leq d}.

 \smallskip

For the second part of the proof of Theorem \ref{Theorem : Weighted fractional Hardy  sp leq d}, we proceed similarly to the first part with $p=2$ and $r>1$. We utilize the fact that for any $\alpha, \beta \in \mathbb{R}$, we have
\begin{equation*}
\begin{split}
2^{\ell \left( -k+2s \right)} & = 2^{\ell \left(- \frac{k+\alpha+\beta-2s}{r} \right) } 2^{\ell \left(- \frac{k+\alpha+\beta-2s}{r} \right)(r-1) } 2^{\ell \alpha} 2^{\ell \beta} \\ & \leq C \min \left\{ |x_{k}|^{- \frac{k+\alpha+\beta-2s}{r}}, |y_{k}|^{- \frac{k+\alpha+\beta-2s}{r}} \right\} \max \left\{ |x_{k}|^{- \frac{k+\alpha+\beta-2s}{r}}, |y_{k}|^{- \frac{k+\alpha+\beta-2s}{r}} \right\}^{r-1} \\ & \hspace{5mm} \times |x_{k}|^{\alpha} |y_{k}|^{\beta} \\ & = CW_{r}(x,y) |x_{k}|^{\alpha} |y_{k}|^{\beta} ,
\end{split}
\end{equation*}  
for some positive constant $C=C(s,r,k,\alpha,\beta)$, and for all $(x_k, x_{d-k}), (y_k, y_{d-k}) \in \mathcal{B}^{i}_{\ell}$. Here, $W_{r}(x,y)$ is as defined in \eqref{Defn: W_r}. Therefore, for any $q' \in \left(2, \frac{2d}{d-2s} \right]$, we obtain
\begin{equation*}
  \left(  \int_{\mathbb{R}^{d}} \int_{\mathbb{R}^{d}} \frac{|u(x)-u(y)|^{2}}{|x-y|^{d+2s}} W_{r}(x,y) |x_{k}|^{\alpha} |y_{k}|^{\beta} \, dx \, dy \right)^{\frac{1}{2}} \geq C \left( \int_{\mathbb{R}^{d}} \frac{|u(x)|^{q'}}{|x_{k}|^{\theta' + \left( k-2s \right) \frac{q'}{2} }} \, dx \right)^{\frac{1}{q'}}. 
\end{equation*}
This completes the second part of the proof.
\end{proof}

\begin{remark}
 For $sp<d$, Theorem \ref{Theorem : Weighted fractional Hardy  sp leq d} could also be proved using the inequality from \cite[Theorem 3.1]{MR3803664} and the fact that, for $2\leq k<d$, the set $\R^d\setminus K$ is a John domain. However, the mentioned inequality does not capture the endpoint case $sp=d$ and therefore we decided to tackle a different strategy of the proof.
\end{remark}

\section{The case \texorpdfstring{$k=d$}{k=d}}\label{The case k=d} 

In this section, we examine the case $k = d$, which corresponds to a  singularity at the origin, and prove Theorem \ref{Theorem 6}, which establishes a weighted fractional Hardy--Sobolev--Maz'ya inequality for the case $k=d$ with an appropriate logarithmic weight function. Our primary objective is to first derive a weighted fractional Hardy--Sobolev inequality with a logarithmic weight function. The inclusion of the logarithmic function arises naturally due to the integrability condition near the origin. We also show that, when $k=d$, there is no chance to have the Hardy--Sobolev--Maz'ya inequality like in \eqref{HSMk<d}, i.e. we show that such inequality fails in general.

\smallskip  

The following theorem establishes a weighted fractional Hardy--Sobolev inequality involving a logarithmic weight function, which plays a fundamental role in deriving the weighted fractional Hardy--Sobolev--Maz'ya inequality for the case $k=d$. The proof is inspired by the ideas presented in \cite[Theorem 3.1]{Nguyen2018}, where a similar result is obtained in the setting of fractional Caffarelli--Kohn--Nirenberg inequalities for the critical case. Specifically, we formulate and prove the following theorem.

\begin{theorem}\label{Th : Weighted fractional Hardy k=d}
Let $d \geq 1$, $p > 1$, $q > 1$, $\alpha, \beta \in \mathbb{R}$, and $s \in (0,1)$. If $sp < d$, assume that $p \leq q \leq \frac{dp}{d - sp}$, and if $sp = d$, assume that $q \geq p$. Then, there exists a constant $C = C(d, s, p, q, \alpha, \beta) > 0$ such that for all $u \in C^{1}_{c}(\mathbb{R}^{d})$ with $\operatorname{supp} u \subset B(0,R)$ for some $R>0$, the following inequality holds:
\begin{equation}\label{Th 4.1 : ineq 1}
\begin{split}
     \Bigg( \int_{\mathbb{R}^{d}} \int_{\mathbb{R}^{d}} \frac{|u(x)-u(y)|^{p}}{|x-y|^{d+sp}}  |x|^{-(d-\alpha+\beta-sp)/2} &  |y|^{-(d+\alpha-\beta-sp)/2}  \, dx \,dy  \Bigg)^{\frac{1}{p}} \\ & \hspace{5mm} \geq C \left( \int_{\mathbb{R}^{d}} \frac{|u(x)|^{q}}{|x|^{d} \ln^{q} \left( \frac{4R}{|x|} \right)} \, dx \right)^{\frac{1}{q}}.
\end{split}  
\end{equation}
Furthermore, for any $q'$ satisfying $2 \leq q' \leq \frac{2d}{d-2s}$ when $2s<d$, and $q' \geq 2$ when $2s=d$, and for any $r>1$, we have for any $u \in C^{1}_{c}(\mathbb{R}^{d})$ with $\operatorname{supp} u \subset B(0,R)$ for some $R>0$,
\begin{equation}\label{Th 4.1 : ineq 2}
  \left(  \int_{\mathbb{R}^{d}} \int_{\mathbb{R}^{d}} \frac{|u(x)-u(y)|^{2}}{|x-y|^{d+2s}} W^{d}_{r}(x,y) |x|^{\alpha} |y|^{\beta} \, dx \, dy \right)^{\frac{1}{2}} \geq C \left( \int_{\mathbb{R}^{d}} \frac{|u(x)|^{q'}}{|x|^{d} \ln^{q'} \left( \frac{4R}{|x|} \right)} \, dx \right)^{\frac{1}{q'}}. 
\end{equation}
where
\begin{equation}\label{Defn W rd}
    W^{d}_{r}(x,y)= \min \left\{ |x|^{- \frac{d+\alpha+\beta-2s}{r}}, |y|^{- \frac{d+\alpha+\beta-2s}{r}} \right\} \max \left\{ |x|^{- \frac{d+\alpha+\beta-2s}{r}}, |y|^{- \frac{d+\alpha+\beta-2s}{r}} \right\}^{r-1},
\end{equation}
and $C=C(d,s,r,q',\alpha, \beta)$ is a positive constant.
\end{theorem}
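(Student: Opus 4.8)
The plan is to carry over the dyadic machinery from the proof of Theorem \ref{Theorem : Weighted fractional Hardy  sp leq d}, specialized to $k=d$, where $K$ collapses to the origin and the tangential factor $\mathbb{R}^{d-k}$ disappears, so that the decomposition reduces to pure annuli and no subdivision into cubes is needed. Normalizing so that $\operatorname{supp} u\subset B(0,R)$ with $2^{n_0}\leq R<2^{n_0+1}$, I would set $\mathcal{B}_\ell:=\{x:2^\ell\leq|x|<2^{\ell+1}\}$ for $\ell\leq n_0$, whence $\operatorname{supp} u\subset\bigcup_{\ell\leq n_0}\mathcal{B}_\ell$. On each annulus I apply Lemma \ref{sobolev} with $\lambda=2^\ell$ to control $u-(u)_{\mathcal{B}_\ell}$, and I record two elementary comparisons valid on $\mathcal{B}_\ell$: the weight satisfies $|x|^{-(d-\alpha+\beta-sp)/2}|y|^{-(d+\alpha-\beta-sp)/2}\sim 2^{\ell(sp-d)}$, the $\alpha,\beta$ contributions cancelling, and $\ln(4R/|x|)\sim n_0-\ell+1$, the latter being exactly the number of dyadic scales between $|x|$ and $R$ (this is where the logarithm originates, and the factor $4R$ guarantees the log stays bounded below by a positive constant). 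Writing $E_\ell:=2^{\ell(sp-d)}[u]^p_{W^{s,p}(\mathcal{B}_\ell\cup\mathcal{B}_{\ell+1})}$, a bounded-overlap argument as in the final step of Theorem \ref{Theorem : Weighted fractional Hardy  sp leq d} (via \cite[Lemma 3.3]{Sahu2025}) gives $\sum_\ell E_\ell\leq C\mathcal{E}$, where $\mathcal{E}$ is the weighted double-integral energy whose $p$-th root appears on the left of \eqref{Th 4.1 : ineq 1}.

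Splitting $|u|^q$ into $|u-(u)_{\mathcal{B}_\ell}|^q$ and $|(u)_{\mathcal{B}_\ell}|^q$ on each annulus, the oscillation contribution is immediate: after dividing by $|x|^d\sim 2^{\ell d}$ and by $\ln^q(4R/|x|)$ (bounded below), Lemma \ref{sobolev} produces $\sum_\ell E_\ell^{q/p}$, which by \eqref{sumineq} with $\gamma=q/p\geq 1$ is at most $(\sum_\ell E_\ell)^{q/p}\leq C\mathcal{E}^{q/p}$. The entire difficulty is concentrated in the average contribution $\sum_\ell (n_0-\ell+1)^{-q}|(u)_{\mathcal{B}_\ell}|^q$. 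For $1\leq k<d$ this term was absorbed using Lemma \ref{Lemma: estimate} with the geometric factor $2^{(d-k)q/p-(d-k)}>1$; precisely at $k=d$ that factor equals $1$ and the absorption collapses. This degeneracy is the structural reason the inequality can hold only with a logarithmic corrector, and it will be the main obstacle of the proof.

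To overcome it, I would use compact support, which forces $(u)_{\mathcal{B}_{n_0+1}}=0$, together with the adjacent-average estimate $|(u)_{\mathcal{B}_\ell}-(u)_{\mathcal{B}_{\ell+1}}|\leq C E_\ell^{1/p}$, the $k=d$ analogue of \eqref{estimate on B_l and B_l+1} obtained from \cite[Lemma 3.2]{Sahu2025} and Lemma \ref{sobolev}. Telescoping then yields $|(u)_{\mathcal{B}_\ell}|\leq C\sum_{j=\ell}^{n_0}E_j^{1/p}$, so that after the reindexing $n=n_0-\ell$ the average contribution becomes $\sum_{n}(n+3)^{-q}\big(\sum_{n'\leq n}E_{n_0-n'}^{1/p}\big)^q$. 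This is exactly a discrete Hardy inequality for the averaging operator with power weight $(n+3)^{-q}$ and the two exponents $p\leq q$; its validity is governed by the Muckenhoupt-type balance $\sup_N N^{-(q-1)/q}N^{(p-1)/p}<\infty$, whose exponent is nonpositive if and only if $p\leq q$ — exactly the hypothesis of the theorem, with the endpoint $q=p$ reducing to the classical discrete Hardy inequality for $p>1$. Invoking it bounds the average contribution by $C(\sum_\ell E_\ell)^{q/p}\leq C\mathcal{E}^{q/p}$. Adding the two contributions and taking $q$-th roots gives \eqref{Th 4.1 : ineq 1}; since all bounds are uniform as the truncation $m\to-\infty$, the annular sum is completed by monotone convergence. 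This is the step where the overall strategy parallels \cite[Theorem 3.1]{Nguyen2018}.

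Finally, inequality \eqref{Th 4.1 : ineq 2} follows from the identical scheme specialized to $p=2$ with $sp$ replaced by $2s$ throughout: on $\mathcal{B}_\ell$ one checks $W^{d}_{r}(x,y)|x|^\alpha|y|^\beta\sim 2^{\ell(2s-d)}$, with the same cancellation of $\alpha,\beta$ and of the parameter $r$ as in the second part of the proof of Theorem \ref{Theorem : Weighted fractional Hardy  sp leq d}. The weighted energy thus again reduces to $2^{\ell(2s-d)}[u]^2_{W^{s,2}(\mathcal{B}_\ell\cup\mathcal{B}_{\ell+1})}$ on each annulus, and the discrete Hardy step closes the estimate for every $q'\geq 2$ in the stated range, yielding \eqref{Th 4.1 : ineq 2}.
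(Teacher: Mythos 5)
Your proposal is correct, and its skeleton coincides with the paper's: the same dyadic annuli, Lemma \ref{sobolev} at scale $\lambda=2^{\ell}$, the weight comparison $2^{\ell(sp-d)}\lesssim|x|^{-(d-\alpha+\beta-sp)/2}|y|^{-(d+\alpha-\beta-sp)/2}$, the comparison $\ln(4R/|x|)\sim n-\ell+1$, the oscillation/average splitting, and the final bounded-overlap step via \cite[Lemma 3.3]{Sahu2025}. Where you genuinely diverge is the treatment of the average contribution $\sum_{\ell}(n-\ell+1)^{-q}|(u)_{\mathcal{A}_{\ell}}|^{q}$, which you correctly identify as the crux (your diagnosis that the $k<d$ absorption factor $2^{(d-k)(q/p-1)}$ degenerates to $1$ at $k=d$ is exactly the obstruction). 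The paper handles this sum by an iterative absorption: it applies Lemma \ref{Lemma: estimate} with the $\ell$-dependent constant $c=\left(\frac{n-\ell+1}{n-\ell+1/2}\right)^{q-1}$ to get, for each $\ell$, the bound $\frac{|(u)_{\mathcal{A}_{\ell}}|^{q}}{(n-\ell+1)^{q-1}}\leq\frac{|(u)_{\mathcal{A}_{\ell+1}}|^{q}}{(n-\ell+1/2)^{q-1}}+C(\text{energy}_{\ell})^{q/p}$, then sums and telescopes (using $(u)_{\mathcal{A}_{n+1}}=0$), so that the differences of the shifted weights, which are $\sim(n-\ell+1)^{-q}$, reconstruct precisely the logarithmic weight. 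You instead telescope the averages themselves, $|(u)_{\mathcal{A}_{\ell}}|\leq C\sum_{j\geq\ell}E_{j}^{1/p}$, and invoke the classical two-exponent (Bradley--Muckenhoupt) discrete Hardy inequality, whose balance condition $\sup_{N}N^{1/q-1/p}<\infty$ holds exactly when $p\leq q$. Both mechanisms are valid and yield the same bound $C\mathcal{E}^{q/p}$. The paper's route is self-contained: it is, in effect, an inline proof of the discrete Hardy inequality it needs, using only the elementary Lemma \ref{Lemma: estimate}. Yours is more conceptual: it isolates the discrete Hardy structure, explains why the hypothesis $p\leq q$ is precisely the Muckenhoupt condition (and hence why the logarithm is the natural corrector at $k=d$), and would adapt to other weights; the cost is reliance on an external classical result, which in a final write-up you should cite precisely (e.g., Bradley's or Bennett's weighted discrete Hardy inequality with mixed exponents) rather than leave implicit.
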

\begin{proof}
Let $u \in C^{1}_{c}(\mathbb{R}^{d})$ such that $\operatorname{supp} u \subset B(0,R)$, where $2^{n} \leq R <2^{n+1}$ for some $n \in \mathbb{Z}$. For each $\ell \in \mathbb{Z}$, define
\begin{equation*}
    \mathcal{A}_{\ell} := \left\{ x \in \mathbb{R}^{d} : 2^{\ell} \leq |x| < 2^{\ell+1} \right\}.
\end{equation*}
 By applying Lemma \ref{sobolev} with the domain $\Omega = \{  x \in \mathbb{R}^{d} : 1 < |x| < 2 \}$, $ \lambda = 2^{\ell}$, and using translation invariance, we have
\begin{equation*}
 \fint_{\mathcal{A}_{\ell}} |u(x)-(u)_{\mathcal{A}_{\ell}}|^{q}  dx \leq C 2^{\ell \left(sp-d \right) \frac{q}{p} }[u]^{q}_{W^{s, p}(\mathcal{A}_{\ell})}  ,
\end{equation*}
where $C= C(d,p,s,q)$ is a positive constant. Furthermore, we have
\begin{equation}\label{ineq 1 k=d}
2^{\ell \left( sp-d \right)}= 2^{\ell (-d+\alpha-\beta + sp)/2 } 2^{\ell (-d-\alpha+\beta + sp)/2 } \leq C |x|^{-(d-\alpha+\beta-sp)/2} |y|^{-(d+\alpha-\beta-sp)/2}
\end{equation}  
for some positive constant $C=C(s,p,d,\alpha,\beta)$, for all $x,y \in \mathcal{A}_{\ell}$. Substituting this into the previous inequality, we obtain
\begin{equation*}
\begin{split}
    \fint_{\mathcal{A}_{\ell}} |u(x)- &(u)_{\mathcal{A}_{\ell}}|^{q}  dx  \\ & \leq C \left( \int_{\mathcal{A}_{\ell}} \int_{\mathcal{A}_{\ell}} \frac{|u(x)-u(y)|^{p}}{|x-y|^{d+sp}} |x|^{-(d-\alpha+\beta-sp)/2} |y|^{-(d+\alpha-\beta-sp)/2} \, dx \, dy \right)^{\frac{q}{p}}.
\end{split} 
\end{equation*}
For each $x \in \mathcal{A}_{\ell}$, we have $|x| < 2^{\ell+1}$, which leads to the inequality $\ln \left(  \frac{4R}{|x|} \right) \geq n-\ell+1$. By making use of this inequality, along with $\frac{1}{(n-\ell+1)^{q}} \leq 1$ for all $\ell \leq n$, and using the above inequality together with triangle inequality, we obtain
\begin{equation*}
    \begin{split}
        \int_{\mathcal{A}_{\ell}} \frac{|u(x)|^{q}}{|x|^{d} \ln^{q} \left( \frac{4R}{|x|} \right)} \, dx & \leq C \left( \int_{\mathcal{A}_{\ell}} \int_{\mathcal{A}_{\ell}} \frac{|u(x)-u(y)|^{p}}{|x-y|^{d+sp}} |x|^{-(d-\alpha+\beta-sp)/2} |y|^{-(d+\alpha-\beta-sp)/2} \, dx \, dy \right)^{\frac{q}{p}} \\ & \hspace{5mm} + C \frac{|(u)_{\mathcal{A}_{\ell}}|^{q}}{(n-\ell+1)^{q}},
    \end{split}
\end{equation*}
where $C=C(s,p,d, \alpha, \beta, q)$ is a positive constant. By summing the above inequality from $\ell=m$ to $n$, we obtain
\begin{equation}\label{ineq 2 k=d}
    \begin{split}
      \sum_{\ell=m}^{n}  \int_{\mathcal{A}_{\ell}}  & \frac{|u(x)|^{q}}{|x|^{d} \ln^{q} \left( \frac{4R}{|x|} \right)} \, dx  \\ &  \leq C \sum_{\ell=m}^{n} \left( \int_{\mathcal{A}_{\ell}} \int_{\mathcal{A}_{\ell}} \frac{|u(x)-u(y)|^{p}}{|x-y|^{d+sp}} |x|^{-(d-\alpha+\beta-sp)/2} |y|^{-(d+\alpha-\beta-sp)/2} \, dx \, dy \right)^{\frac{q}{p}} \\ & \hspace{5mm} + C \sum_{\ell=m}^{n}  \frac{|(u)_{\mathcal{A}_{\ell}}|^{q}}{(n-\ell+1)^{q}}.
    \end{split}
\end{equation}
Applying \cite[Lemma 3.2]{Sahu2025} and using the inequality \eqref{ineq 1 k=d} for all $x,y \in  \mathcal{A}_{\ell} \cup \mathcal{A}_{\ell+1}$, we obtain  
\begin{equation*}
\begin{split}
   |(u)_{\mathcal{A}_{\ell}} & - (u)_{\mathcal{A}_{\ell+1}} |^{q}  \leq C 2^{\ell \left(sp-d \right) \frac{q}{p}} [u]^{q}_{W^{s, p}(\mathcal{A}_{\ell} \cup \mathcal{A}_{\ell+1})} \\ & \leq C \left( \int_{\mathcal{A}_{\ell} \cup \mathcal{A}_{\ell+1}} \int_{\mathcal{A}_{\ell} \cup \mathcal{A}_{\ell+1}} \frac{|u(x)-u(y)|^{p}}{|x-y|^{d+sp}} |x|^{-(d-\alpha+\beta-sp)/2} |y|^{-(d+\alpha-\beta-sp)/2} \, dx \, dy \right)^{\frac{q}{p}},
\end{split}
    \end{equation*}
where $C$ is a positive constant independent of $\ell$. By applying Lemma \ref{Lemma: estimate} with $c :=\left( \frac{n-\ell+1}{n-\ell+1/2} \right)^{q-1}$ together with the above inequality, we obtain
\begin{equation*}
\begin{split}
   & \frac{|(u)_{\mathcal{A}_{\ell}} |^{q}}{(n-\ell+1)^{q-1}}  \leq \frac{|(u)_{\mathcal{A}_{\ell+1}}|^{q}}{(n-\ell+ 1/2)^{q-1}} \\ & \hspace{8mm} + C \left( \int_{\mathcal{A}_{\ell} \cup \mathcal{A}_{\ell+1}} \int_{\mathcal{A}_{\ell} \cup \mathcal{A}_{\ell+1}} \frac{|u(x)-u(y)|^{p}}{|x-y|^{d+sp}} |x|^{-(d-\alpha+\beta-sp)/2} |y|^{-(d+\alpha-\beta-sp)/2} \, dx \, dy \right)^{\frac{q}{p}}.
\end{split}  
\end{equation*}
Summing the above inequality from $\ell=m$ to $n$, and using the asymptotics
\begin{equation*}
    \frac{1}{(n-\ell+1)^{q-1}} - \frac{1}{(n-\ell + 3/2)^{q-1}} \sim \frac{1}{(n-\ell+1)^{q}},
\end{equation*}
and combining with the inequality \eqref{ineq 2 k=d}, we obtain
\begin{equation*}
\begin{split}
    \sum_{\ell=m}^{n} &  \int_{\mathcal{A}_{\ell}}   \frac{|u(x)|^{q}}{|x|^{d} \ln^{q} \left( \frac{4R}{|x|} \right)} \, dx  \\ &  \leq C \sum_{\ell=m}^{n} \left( \int_{\mathcal{A}_{\ell} \cup \mathcal{A}_{\ell+1}} \int_{\mathcal{A}_{\ell} \cup \mathcal{A}_{\ell+1}} \frac{|u(x)-u(y)|^{p}}{|x-y|^{d+sp}} |x|^{-(d-\alpha+\beta-sp)/2} |y|^{-(d+\alpha-\beta-sp)/2} \, dx \, dy \right)^{\frac{q}{p}}.
\end{split}
\end{equation*}
By applying \eqref{sumineq} with $\gamma= \frac{q}{p}$ to the right-hand side of the above inequality and utilizing \cite[Lemma 3.3]{Sahu2025}, we prove the first part of the proof of theorem.

For the second part of the proof of theorem, we proceed similarly to the first part with $p=2$ and $r>1$. We utilize the fact that for any $\alpha, \beta \in \mathbb{R}$, we have
\begin{equation*}
\begin{split}
2^{\ell \left( 2s-d \right)} & = 2^{\ell \left(- \frac{d+\alpha+\beta-2s}{r} \right) } 2^{\ell \left(- \frac{d+\alpha+\beta-2s}{r} \right)(r-1) } 2^{\ell \alpha} 2^{\ell \beta} \\ & \leq C \min \left\{ |x|^{- \frac{d+\alpha+\beta-2s}{r}}, |y|^{- \frac{d+\alpha+\beta-2s}{r}} \right\} \max \left\{ |x|^{- \frac{d+\alpha+\beta-2s}{r}}, |y|^{- \frac{d+\alpha+\beta-2s}{r}} \right\}^{r-1}  |x|^{\alpha} |y|^{\beta} \\ & = CW^{d}_{r}(x,y) |x|^{\alpha} |y|^{\beta} ,
\end{split}
\end{equation*}  
for some positive constant $C=C(s,r,d,\alpha,\beta)$, and for all $x,y \in \mathcal{A}_{\ell}$. Here, $W^{d}_{r}(x,y)$ is as defined in \eqref{Defn W rd}. Therefore, for any $q' \in \left[ 2, \frac{2d}{d-2s} \right]$ when $2s<d$, and $q' \geq 2$ when $2s=d$, we obtain
\begin{equation*}
  \left(  \int_{\mathbb{R}^{d}} \int_{\mathbb{R}^{d}} \frac{|u(x)-u(y)|^{2}}{|x-y|^{d+2s}} W^{d}_{r}(x,y) |x|^{\alpha} |y|^{\beta} \, dx \, dy \right)^{\frac{1}{2}} \geq C \left( \int_{\mathbb{R}^{d}} \frac{|u(x)|^{q'}}{|x|^{d} \ln^{q'} \left( \frac{4R}{|x|} \right) } \, dx \right)^{\frac{1}{q'}}. 
\end{equation*}
This finishes the second part of the proof.
\end{proof}

\begin{proof}[\textbf{Proof of Theorem \ref{Theorem 6}}] 
We begin by defining the function
\begin{equation*}
  v(x) = |x|^{(d + \alpha + \beta - sp)/p} u(x).  
\end{equation*}
Using this definition, we obtain the following relation:
\begin{equation}
\left( \int_{\mathbb{R}^{d}} \frac{|v(x)|^{q}}{|x|^{d}\ln^{q} \left( \frac{4R}{|x|} \right)} \, dx \right)^{\frac{1}{q}} = \left( \int_{\mathbb{R}^{d}} \frac{|u(x)|^{q}}{\ln^{q} \left( \frac{4R}{|x|} \right)}  |x|^{\frac{q}{p}(\alpha+\beta) -\theta} \, dx \right)^{\frac{1}{q}}.
\end{equation}
By substituting this into the inequality \eqref{Th 4.1 : ineq 1} given in Theorem \ref{Th : Weighted fractional Hardy k=d},  and subsequently applying \cite[Theorem 1.6]{dyda2022sharp}, we establish the weighted fractional Hardy–Sobolev–Maz'ya inequality for the case $p \geq 2$.

For the case $1<p<2$, we proceed by using inequality \eqref{Th 4.1 : ineq 2} from Theorem \ref{Th : Weighted fractional Hardy k=d}. By making the appropriate substitutions, replacing $s$ with $\frac{sp}{2}$ (noting that for $1<p<2$, $\frac{sp}{2} \in (0,1)$), and setting $r=p$, we obtain that
\begin{equation*}
\begin{split}
    \int_{\mathbb{R}^{d}} \int_{\mathbb{R}^{d}} \frac{\left( v(x)^{\langle p/2 \rangle}-v(y) )^{\langle p/2 \rangle} \right)^{2}}{|x-y|^{d+sp}} W^{d}(x,y) &|x|^{\alpha} |y|^{\beta}  \, dx \, dy \\ & \geq C   \left( \int_{\mathbb{R}^{d}} \frac{|u(x)|^{q}}{\ln^{q} \left( \frac{4R}{|x|} \right)}  |x|^{\frac{q}{p}(\alpha+\beta) -\theta} \, dx \right)^{\frac{p}{q}},
\end{split}
\end{equation*}
where  $p \leq q \leq \frac{dp}{d - sp}$ for $sp<d$, and $p \geq q$ for $sp=d$, and 
\begin{equation*}
  W^{d}(x,y)= \min \left\{ |x|^{- \frac{d+\alpha+\beta-sp}{p}}, |y|^{- \frac{d+\alpha+\beta-sp}{p}} \right\} \max \left\{ |x|^{- \frac{d+\alpha+\beta-sp}{p}}, |y|^{- \frac{d+\alpha+\beta-sp}{p}} \right\}^{p-1}.  
\end{equation*}
Finally, by employing \cite[Theorem 2]{dyda2024} along with the above inequality, we establish the weighted fractional Hardy–Sobolev–Maz'ya inequality for the case $1<p<2$.  This completes the proof.
\end{proof}

\smallskip

The following Theorem provides a counterexample showing that the Hardy--Sobolev--Maz'ya inequality \eqref{HSMk<d} does not hold, when $k=d$.
\begin{theorem}\label{Theorem : HSM fails k=d}
Let $\al,\be,\al+\be\in(-d,sp)$ and $0<s<1$, $d,p\geq1$, $\al+\be+sp\neq d$, $sp<d$, $q=dp/(d-sp)$. Then, the weighted Hardy--Sobolev--Maz'ya inequality fails, i.e. it \textbf{does not} exist a positive constant $C_0=C_0(d,s,p,\al,\be)$ such that
\begin{equation}\label{HSM}
   \begin{split}
     \int_{\mathbb{R}^{d}} \int_{\mathbb{R}^{d}}  \frac{|u(x)-u(y)|^{p}}{|x-y|^{d+sp}}  |x|^{\alpha} |y|^{\beta} \, dy \, dx - \mathcal{C} &  \int_{\mathbb{R}^{d}} \frac{|u(x)|^{p}}{|x|^{sp-\alpha-\beta}} \, dx  \\ & \geq C_0 \left( \int_{\mathbb{R}^{d}} |u(x)|^{q} |x|^{ \frac{q}{p}(\alpha+\beta)} \, dx \right)^{\frac{p}{q}}  
     \end{split}
    \end{equation} 
    for all $u\in W^{s,p}_{\al,\be}(\R^d)$, where $\mathcal{C}=\mathcal{C}_1$ is the optimal constant in the weighted fractional Hardy inequality given by \eqref{The value C 1}.
\end{theorem}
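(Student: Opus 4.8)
The plan is to show that the best constant in \eqref{HSM} is $0$ by producing an explicit sequence along which the left-hand side (the Hardy deficit) stays bounded while the right-hand side blows up. First I would pass to ground-state coordinates: with $\gamma=(d+\alpha+\beta-sp)/p$, $\omega(x)=|x|^{-\gamma}$ and $v(x)=|x|^{\gamma}u(x)$, a direct computation using $q=dp/(d-sp)$ (so $\theta=0$) turns the right-hand side into the scale-invariant quantity
\begin{equation*}
\left(\int_{\mathbb{R}^d}|u(x)|^q|x|^{\frac{q}{p}(\alpha+\beta)}\,dx\right)^{p/q}=\left(\int_{\mathbb{R}^d}\frac{|v(x)|^q}{|x|^d}\,dx\right)^{p/q}.
\end{equation*}
The heuristic behind the failure is that $|x|^{-d}\,dx$ is the logarithmically infinite scale-invariant measure, so a profile $v$ that is constant across many dyadic scales makes the right-hand side arbitrarily large, whereas the deficit only registers the \emph{variation} of $v$ and should remain bounded.

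Concretely, I would fix a smooth radial profile with $0\le v_n\le1$, $v_n\equiv1$ on the shell $\{2^{-n}\le|x|\le1\}$ and $v_n\equiv0$ off $\{2^{-n-1}\le|x|\le2\}$, and set $u_n=\omega v_n$. Then $u_n\in C^1_c(\mathbb{R}^d\setminus\{0\})\subset W^{s,p}_{\alpha,\beta}(\mathbb{R}^d)$, and the identity above yields
\begin{equation*}
\left(\int_{\mathbb{R}^d}|u_n|^q|x|^{\frac{q}{p}(\alpha+\beta)}\,dx\right)^{p/q}\ge\left(\int_{\{2^{-n}\le|x|\le1\}}\frac{dx}{|x|^d}\right)^{p/q}=\bigl(|\mathbb{S}^{d-1}|\,n\ln2\bigr)^{p/q},
\end{equation*}
which tends to $\infty$ as $n\to\infty$. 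It therefore remains to prove that the deficit $D(u_n):=[u_n]^p_{W^{s,p}_{\alpha,\beta}(\mathbb{R}^d)}-\mathcal{C}_1\int_{\mathbb{R}^d}|u_n|^p|x|^{-(sp-\alpha-\beta)}\,dx$ stays bounded uniformly in $n$; granting this, if \eqref{HSM} held with some $C_0>0$ we would get $\sup_nD(u_n)\ge C_0(|\mathbb{S}^{d-1}|n\ln2)^{p/q}\to\infty$, a contradiction.

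For the uniform bound I would invoke Lemma \ref{Lemma : on flat submanifold}, which identifies the Hardy potential as $V(x)=\mathcal{C}_1|x|^{-(sp-\alpha-\beta)}$ via the principal value of $(\omega(x)-\omega(y))|\omega(x)-\omega(y)|^{p-2}k(x,y)$, where $k(x,y)=\tfrac12|x-y|^{-d-sp}(|x|^\alpha|y|^\beta+|x|^\beta|y|^\alpha)$. Symmetrizing, this rewrites the deficit as
\begin{equation*}
D(u_n)=\iint_{\mathbb{R}^d\times\mathbb{R}^d}\Phi\bigl(\omega(x),\omega(y),v_n(x),v_n(y)\bigr)\,k(x,y)\,dx\,dy,
\end{equation*}
with $\Phi(A,B,f,g):=|Af-Bg|^p-(|f|^pA-|g|^pB)(A-B)|A-B|^{p-2}$ (for $p=2$ one checks $\Phi=AB|f-g|^2$, recovering the exact ground-state identity). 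The decisive feature, valid for all $p\ge1$, is $\Phi(A,B,f,f)=0$, so the integrand vanishes whenever $v_n(x)=v_n(y)$, and in particular on the whole plateau: the effective domain of integration is confined to pairs straddling one of the two transition layers. Using $0\le v_n\le1$ and the joint degree-$p$ homogeneity of $\Phi$ in $(A,B)$ one has the crude bound $|\Phi|\le C\max(A,B)^p$, and $D$ is invariant under the scaling $v\mapsto v(\lambda\,\cdot)$, i.e. $u\mapsto\lambda^{\gamma}u(\lambda\,\cdot)$.

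The main obstacle is precisely this uniform bound: controlling the nonlocal interactions between widely separated dyadic scales as the plateau widens. I would split the integral into the contribution of the inner transition (scale $2^{-n}$) and of the outer transition (scale $1$), which by the scaling invariance each equal a fixed $O(1)$ quantity independent of $n$; the cross interaction between the two layers; and the interactions of each layer, and of the plateau, with the far field $\{v_n=0\}$. For each of the latter pieces one estimates $|x-y|$, $\omega$ and $k$ on the relevant annuli and checks, using the admissibility constraints $\alpha,\beta<sp$ and $\alpha,\beta>-d$ (equivalently $\gamma p=d+\alpha+\beta-sp$ together with $\alpha-sp,\beta-sp,-\alpha-d,-\beta-d<0$), that the contribution of scales separated by $m$ dyadic steps decays geometrically like $2^{-cm}$ for some $c>0$. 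Summing these geometric tails over all scale separations gives $D(u_n)\le M<\infty$ uniformly in $n$, completing the argument. The delicate point is that the energy $[u_n]^p_{W^{s,p}_{\alpha,\beta}}$ and the Hardy term individually diverge logarithmically as $n\to\infty$, and only the exact cancellation encoded in $\Phi$ keeps their difference bounded; propagating this cancellation through the long-range estimates is the crux of the proof.
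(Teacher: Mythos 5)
Your proposal is correct, but it takes a genuinely different route from the paper. The paper tests \eqref{HSM} against the explicit family $u_\varepsilon(x)=|x|^{\gamma}$ on the unit ball, $|x|^{-\gamma-\varepsilon}$ outside (for $\gamma>0$): the Hardy term and the exterior part of the seminorm are computed semi-explicitly via the inversion $x\mapsto x/|x|^2$, the deficit stays bounded because $\varepsilon\mapsto\mathcal{C}_\varepsilon$ is differentiable at $\varepsilon=0$ (so $(\mathcal{C}_\varepsilon-\mathcal{C})/\varepsilon$ is bounded), the right-hand side blows up like $\varepsilon^{-p/q}$, and the case $\gamma<0$ is reduced to $\gamma>0$ by the same inversion. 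You instead work in ground-state coordinates with plateau profiles spanning $n$ dyadic scales, so the right-hand side blows up like $n^{p/q}$ (the same logarithmic divergence of the scale-invariant measure $|x|^{-d}\,dx$ that drives the paper's $1/\varepsilon$), and you control the deficit through the Frank--Seiringer representation $D(u)=\iint\Phi\,k$ with $\Phi(A,B,f,g)=|Af-Bg|^p-(|f|^pA-|g|^pB)(A-B)|A-B|^{p-2}$, whose integrand vanishes when $f=g$. Your key claim, the geometric decay of cross-scale interactions, is asserted rather than executed, but it does hold and the exponent arithmetic closes precisely because $\gamma p=d+\alpha+\beta-sp$: with the crude bound $|\Phi|\leq C\max(A,B)^p$, a plateau shell at scale $2^{-j}$ interacting with a region at scale $2^{-m}$, $m>j$, where $v_n=0$ contributes at most a constant times $2^{-(m-j)(sp-\alpha)}+2^{-(m-j)(sp-\beta)}$ when $\gamma>0$, and $2^{-(m-j)(d+\alpha)}+2^{-(m-j)(d+\beta)}$ when $\gamma<0$ (similarly for the far field $|y|\geq 2$); the hypotheses $-d<\alpha,\beta<sp$ are exactly what makes all these rates positive, so the sums are uniformly bounded in $n$. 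In terms of trade-offs: the paper's argument is self-contained, reducing everything to one-dimensional integrals plus differentiability of an explicit function, at the price of the inversion trick and a case split on the sign of $\gamma$; yours is more conceptual and robust (no explicit $\mathcal{C}_\varepsilon$, no case split, and the cancellation mechanism is transparent), at the price of a multi-region dyadic estimate that a complete write-up must carry out in full, together with a justification of the identity $D(u)=\iint\Phi\,k$ (a monotone/dominated convergence argument in the Frank--Seiringer limiting procedure, using $u_n\in C^1_c(\mathbb{R}^d\setminus\{0\})$ and the uniform convergence provided by Lemma \ref{Lemma : on flat submanifold} with $k=d$).
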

\begin{proof}
    We provide the proof only for $d>1$; the proof for $d=1$ is similar. For $u\in W^{s,p}_{\al,\be}(\R^d)$, we consider the ratio
    \begin{equation}\label{psi}
    \Psi(u)=\frac{\displaystyle\int_{\R^d}\int_{\R^d}\frac{|u(x)-u(y)|^p}{|x-y|^{d+sp}}|x|^{\al}|y|^{\be}\,dy\,dx-\mathcal{C}\int_{\R^d}\frac{|u(x)|^p}{|x|^{sp-\al-\be}}\,dx}{\left(\displaystyle\int_{\R^d}|u(x)|^q|x|^{q(\al+\be)/p}\,dx\right)^{p/q}}\geq 0.
       \end{equation}
    Suppose that there exists a positive constant $C_0=C_0(d,s,p,\al,\be)$ such that $\Psi(u)\geq C_0$ for all $u\in W^{s,p}_{\al,\be}(\R^d)$. We will show that this is impossible by constructing a set of functions $\{u_{\varepsilon}\}_{\varepsilon>0}\subset W^{s,p}_{\al,\be}(\R^d)$ such that $\Psi(u_{\varepsilon})\rightarrow 0$, when $\varepsilon\rightarrow 0^+$. To this end, fix $\varepsilon>0$ and let $\ga=(d+\al+\be-sp)/p$. Suppose first that $\ga>0$. We define the functions
    $$
    u_{\varepsilon}(x)=\begin{cases}
        |x|^{\ga} & \text{ for }x\in B, \\ 
        |x|^{-\ga-\varepsilon} & \text{ for }x\in B^c,
    \end{cases}
    $$
    where $B$ is the unit ball in $\R^d$, centered at the origin. A direct calculation yields
    \begin{align*}
    \int_{\R^d}\frac{|u(x)|^p}{|x|^{sp-\al-\be}}\,dx&=\int_B|x|^{d+2(\al+\be-sp)}\,dx+\int_{B^c}|x|^{-d-p\varepsilon}\,dx\\
    &=\meas\left(\frac{1}{2(d+\al+\be-sp)}+\frac{1}{p\varepsilon}\right)
    \end{align*}
    and, recalling that $q=\frac{dp}{d-sp}$,
    \begin{align}\label{p/q}
\nonumber\left(\displaystyle\int_{\R^d}|u(x)|^q|x|^{\frac{q(\al+\be)}{p}}\,dx\right)^{\frac{p}{q}}&\geq\left(\int_{B^c}|x|^{-\ga q-q\varepsilon+\frac{q(\al+\be)}{p}}\,dx\right)^{\frac{p}{q}}\\
\nonumber&=\left(\int_{B^c}|x|^{-d-q\varepsilon}\,dx\right)^{\frac{p}{q}}\\
&=\left(\frac{\meas}{q}\right)^{\frac{p}{q}}\varepsilon^{-\frac{p}{q}}.
    \end{align}
    Now, we will estimate the weighted Gagliardo seminorm of $u_{\varepsilon}$. We have
    \begin{align*}
    [u]^p_{W^{s,p}_{\al,\be}(\R^d)}&=\int_B\int_B\frac{||x|^{\ga}-|y|^{\ga}|^p}{|x-y|^{d+sp}}|x|^{\al}|y|^{\be}\,dy\,dx\\
    &\quad+\int_{B^c}\int_{B^c}\frac{||x|^{-\ga-\varepsilon}-|y|^{-\ga-\varepsilon}|^p}{|x-y|^{d+sp}}|x|^{\al}|y|^{\be}\,dy\,dx\\
    &\quad+2\int_B\int_{B^c}\frac{||x|^{\ga}-|y|^{-\ga-\varepsilon}|^p}{|x-y|^{d+sp}}|x|^{\al}|y|^{\be}\,dy\,dx.
    \end{align*}
We will begin with the integral over $B^c\times B^c$. Using the substitution (inversion) $x\mapsto T(x)=x/|x|^2$, $y\mapsto T(y)=y/|y|^2$ with Jacobian $(|x||y|)^{-2d}$, the fact that $|T(x)-T(y)|=|x-y|/(|x||y|)$, symmetry and rotation invariance of the inner integral, we get
\begin{align*}
 \int_{B^c}\int_{B^c}&\frac{||x|^{-\ga-\varepsilon}-|y|^{-\ga-\varepsilon}|^p}{|x-y|^{d+sp}}|x|^{\al}|y|^{\be}\,dy\,dx\\
 &=\frac{1}{2}\int_{B^c}\int_{B^c}\frac{||x|^{-\ga-\varepsilon}-|y|^{-\ga-\varepsilon}|^p}{|x-y|^{d+sp}}\left(|x|^{\al}|y|^{\be}+|x|^{\be}|y|^{\al}\right)\,dy\,dx\\
 &=\frac{1}{2}\int_B\int_B\frac{||x|^{\ga+\varepsilon}-|y|^{\ga+\varepsilon}|^p}{|x-y|^{d+sp}}\left(|x|^{sp-\al-d}|y|^{sp-\be-d}+|x|^{sp-\be-d}|y|^{sp-\al-d}\right)\,dy\,dx\\
 &=\int_B\int_{|y|\leq|x|}\frac{||x|^{\ga+\varepsilon}-|y|^{\ga+\varepsilon}|^p}{||x|e_d-y|^{d+sp}}\left(|x|^{sp-\al-d}|y|^{sp-\be-d}+|x|^{sp-\be-d}|y|^{sp-\al-d}\right)\,dy\,dx\\
 &=\int_B|x|^{-d+p\varepsilon}\,dx\int_B
\frac{|1-|z|^{\ga+\varepsilon}|^p}{|e_d-z|^{d+sp}}|z|^{sp-d}\left(|z|^{-\al}+|z|^{-\be}\right)\,dz\\
&=\frac{\meas}{p\varepsilon}\times\left|\mathbb{S}^{d-2}\right|\int_{0}^{1}\int_{-1}^{1}\frac{|1-r^{\ga+\varepsilon}|^p r^{sp-1}(r^{-\al}+r^{-\be})(1-t^2)^{\frac{d-3}{2}}}{(1-2rt+r^2)^{(d+sp)/2}}\,dt\,dr\\
&=:\frac{\meas}{p\varepsilon}\mathcal{C}_{\varepsilon},
\end{align*}
where $e_d=(0,0,\dots,0,1)$. Observe that $0<\gamma<\gamma+\varepsilon$ implies $|1-t^{\gamma+\varepsilon}|^p>|1-t^{\gamma}|^p$ for all $t\in(0,1)$ and therefore $\mathcal{C}_{\varepsilon}>\mathcal{C}$.

The integral over $B\times B$ is finite, since the function $|x|^{\ga}$, $x\in B$, can be extended to a $C_c^1(\R^d)$ function and such functions belong to $W^{s,p}_{\al,\be}(\R^d)$ by \cite[Lemma 2.1]{valdinoci2015}, and independent on $\varepsilon$. Finally, we need to deal with the integral over $B\times B^c$. We want to show that this integral is bounded for $\varepsilon\rightarrow 0^+$, hence, it suffices to show the finiteness of the integral
$$
I:=\int_B\int_{B^c}\frac{||x|^{\ga}-|y|^{-\ga}|^p}{|x-y|^{d+sp}}|x|^{\al}|y|^{\be}\,dy\,dx.
$$
Again, using the rotation invariance of the inner integral, we have
\begin{align*}
I&=\meas\int_{0}^1r^{d+\al-1}\,dr\int_{B^c}\frac{|r^{\ga}-|y|^{-\ga}|^p}{|re_d-y|^{d+sp}}|y|^{\be}\,dy. 
\end{align*}
Substituting $y=\frac{z}{r}$ with $dy=r^{-d}dz$ in the inner integral, after some manipulations we get
\begin{equation}\label{I}
 I=\meas\int_{0}^{1}r^{2(d+\al)-1}\,dr\int_{|z|\geq r}\frac{|1-|z|^{-\ga}|^p}{|r^2e_d-z|^{d+sp}}|z|^{\be}\,dz=:\meas\int_{0}^{1}r^{2(d+\al)-1}F(r)\,dr. 
\end{equation}
It suffices now to justify the convergence of the above integral, as $r\rightarrow 0^+$ and $r\rightarrow 1^-$. For $r=1$, the value $F(1)$ is finite, because the integral
$$
F(1)=\int_{|z|\geq 1}\frac{||z|^{\gamma}-1|^p}{|z-e_d|^{d+sp}}|z|^{\be-p\gamma}\,dz
$$
is clearly convergent for $|z|\rightarrow\infty$ (since $\be<sp)$ and for $z\rightarrow e_d$. In the latter case we use the bound
$$
|z|^{\ga}-1=\ga\int_{1}^{|z|}\xi^{\ga-1}\,d\xi\leq\ga \max\{1,|z|^{\ga-1}\}\left(|z|-1\right)\leq\ga \max\{1,|z|^{\ga-1}\}|z-e_d|,
$$
which yields
$$
\frac{||z|^{\gamma}-1|^p}{|z-e_d|^{d+sp}}|z|^{\be-p\ga}\lesssim\frac{1}{|z-e_d|^{d-(1-s)p}},\quad\text{ $z$ close to $e_d$},
$$
and the latter is integrable.

For $r$ close to $0$, we observe that $|z|\geq r$ implies $|z-r^2e_d|\geq |z|-r^2\geq (1-r)|z|$, which gives
$$
F(r)\leq (1-r)^{-d-sp}\int_{|z|\geq r}\frac{|1-|z|^{-\ga}|^p}{|z|^{d+sp-\be}}\,dz.
$$
When $r\rightarrow 0^+$, the latter integral behaves like
$$
\int_{\{|z|\geq r\}}\frac{dz}{|z|^{p\ga+d+sp-\be}}\approx\int_{\{|z|\geq r\}}\frac{dz}{|z|^{2d+\al}}\approx r^{-d-\al}.
$$
Therefore, the integral in \eqref{I} is finite at $r=0$, which is guaranteed by the assumption $\al>-d$.

Overall, summarizing all the computations above, we obtain that there exist positive constants $C_1$, $C_2$ such that the Hardy difference of $u_{\varepsilon}$ is bounded as follows,
$$
0<[u]^p_{W^{s,p}_{\al,\be}(\R^d)}-\mathcal{C}\int_{\R^d}\frac{|u(x)|^p}{|x|^{sp-\al-\be}}\,dx\leq C_1+C_2\frac{\mathcal{C_{\varepsilon}-\mathcal{C}}}{\varepsilon}.
$$
The latter is bounded for $\varepsilon\rightarrow 0^+$, since the function 
$$
t\mapsto\int_{B^c}\int_{B^c}\frac{||x|^{-t}-|y|^{-t}|^p}{|x-y|^{d+sp}}|x|^{\al}|y|^{\be}\,dy\,dx
$$
is differentiable at $t=\ga$. Therefore, using \eqref{p/q}, we have for some positive constant $C=C(d,s,p,\al,\be)$, that
$$
0<C_0\leq\Psi(u_{\varepsilon})\leq C\varepsilon^{p/q}\rightarrow 0,
$$
as $\varepsilon\rightarrow 0^+$. This gives a contradiction and in turn implies that the Hardy--Sobolev--Maz'ya inequality \eqref{HSM} cannot hold with a positive constant $C_0$.

We now move to the case, when $\ga<0$, which is equivalent to $\al+\be<sp-d$. We observe that the substitution $x\mapsto T(x)=x/|x|^2$, $y\mapsto T(y)= y/|y|^2$ transforms the functional $\Psi$ from \eqref{psi} into
$$
\Psi(u_T)=\frac{\displaystyle\int_{\R^d}\int_{\R^d}\frac{|u_T(x)-u_T(y)|^p}{|x-y|^{d+sp}}|x|^{\al'}|y|^{\be'}\,dy\,dx-\mathcal{C}\int_{\R^d}\frac{|u_T(x)|^p}{|x|^{sp-\al'-\be'}}\,dx}{\left(\displaystyle\int_{\R^d}|u_T(x)|^q|x|^{q(\al'+\be')/p}\,dx\right)^{p/q}},
$$
where $u_T(x)=u(T(x))$, $\al'=sp-\al-d$, $\be'=sp-\be-d$. It is straightforward to check that $\mathcal{C}(d,s,p,\al,\be)=\mathcal{C}(d,s,p,\al',\be')$, we only need to ensure that $\al',\be',\al'+\be'\in(-d,sp)$. We clearly have $\al',\be'\in(-d,sp)$, because $\al,\be\in(-d,sp)$. Moreover, since $\al'+\be'=2(sp-d)-(\al+\be)$, we have $\al'+\be'<sp$ if and only if $\al+\be>sp-2d$, but this is true, since $\al+\be>-d>sp-2d$ by the assumption $sp<d$. Also, it holds $\al'+\be'>-d$ if and only if $\al+\be<2sp-d$ and this is also true, since we assume that $\al+\be<sp-d$.

We also have
$$
\ga'=\frac{d+\al'+\be'-sp}{p}=-\frac{d+\al+\be-sp}{p}=-\ga>0,
$$
therefore, we may reduce the case of negative $\ga$ to the previous situation. That ends the proof.
\end{proof}


\begin{thebibliography}{10}


\bibitem{Abramowitz1992}
M. Abramowitz and I. A. Stegun (eds.), \emph{Handbook of Mathematical Functions with
Formulas, Graphs, and Mathematical Tables}, Dover Publ., New York, 1992.

\bibitem{AdiJana2024}
Adimurthi, P. Jana, P. Roy, \emph{Boundary fractional Hardy's inequality in dimension one: The critical case}, arXiv:2407.12098, 2024



\bibitem{Adi2023}
Adimurthi, P. Roy, and V. Sahu, \emph{Fractional boundary Hardy inequality for the critical cases}, arXiv:2308.11956, 2023.


\bibitem{Adi2024}
Adimurthi, P. Roy, and V. Sahu, \emph{Fractional Hardy inequality with singularity on submanifold}, arXiv:2407.10863, 2024. 



\bibitem{Barbatis2004}
G. Barbatis, S. Filippas and A. Tertikas, \emph{A unified approach to improved $L^p$ Hardy inequalities with best constants}, Trans. Amer. Math. Soc. {\bf 356} (2004), no.~6, 2169--2196.

\bibitem{Bianchi2024}
F. Bianchi, L. Brasco and A.~C. Zagati, \emph{On the sharp Hardy inequality in Sobolev-Slobodecki\u i\ spaces}, Math. Ann. {\bf 390} (2024), no.~1, 493--555.



\bibitem{Bogdan2011}
K. Bogdan and B. Dyda, \emph{The best constant in a fractional Hardy inequality}, Math. Nachr. {\bf 284} (2011), no.~5-6, 629--638.

\bibitem{valdinoci2015}
S. Dipierro and E. Valdinoci, \emph{A density property for fractional weighted Sobolev spaces}, Atti Accad. Naz. Lincei Rend. Lincei Mat. Appl. {\bf 26} (2015), no.~4, 397--422.

\bibitem{MR3900847}
B. Dyda, L. Ihnatsyeva, J. Lehrb\"ack, H. Tuominen, and A. V. V\"ah\"akangas, \emph{Muckenhoupt $A_p$-properties of distance functions and applications to Hardy-Sobolev--type inequalities}, Potential Anal. {\bf 50} (2019), no.~1, 83--105.

\bibitem{dyda2024}
B. Dyda and M. Kijaczko, \emph{Sharp fractional {H}ardy inequalities with a remainder for {$1 < p < 2$}}, J. Funct. Anal. \textbf{286} (2024), no.~9, Paper No. 110373, 19.

\bibitem{dyda2022sharp}
B. Dyda and M. Kijaczko, \emph{Sharp weighted fractional {H}ardy inequalities}, Studia Math. \textbf{274} (2024), no.~2, 153--171.


\bibitem{MR3803664}
B. Dyda, J. Lehrb\"{a}ck, and  A.~V. V\"{a}h\"{a}kangas,
\emph{Fractional {H}ardy-{S}obolev type inequalities for half spaces and
  {J}ohn domains}, Proc. Amer. Math. Soc. {\bf 146}, 8 (2018), 3393--3402.


\bibitem{MR4597627}
F. Fischer, 
\emph{A non-local quasi-linear ground state representation and criticality
  theory}, Calc. Var. Partial Differential Equations {\bf 62}, 5 (2023), Paper
  No. 163, 33.


\bibitem{Lieb2008}
R.~L. Frank, E.~H. Lieb and R. Seiringer, \emph{Hardy-Lieb-Thirring inequalities for fractional Schr\"odinger operators}, J. Amer. Math. Soc. {\bf 21} (2008), no.~4, 925--950.


\bibitem{Frank2008}
R. L. Frank and R. Seiringer, \emph{Non-linear ground state representations and sharp Hardy
inequalities}, J. Funct. Anal. {\bf 255} (2008), no. 12, 3407–3430.


\bibitem{Frank2009}
R. L. Frank and R. Seiringer, \emph{Sharp fractional Hardy inequalities in half-spaces}, in {\it Around the research of Vladimir Maz'ya. I}, 161--167, Int. Math. Ser. (N. Y.), 11, Springer, New York.

\bibitem{MR312232}
L.~I. Hedberg, On certain convolution inequalities, Proc. Amer. Math. Soc. {\bf 36} (1972), 505--510.

\bibitem{kijacko2025}
M. Kijaczko, \emph{Asymptotics of weighted Gagliardo seminorms},  Ann. Mat. Pura Appl. (2025).

\bibitem{kijacko2023}
M. Kijaczko, \emph{Fractional Sobolev spaces with power weights}, Ann. Sc. Norm. Super. Pisa Cl. Sci. (5) {\bf 24} (2023), no.~3, 1741--1764.

\bibitem{Loss2010}
M. Loss and C.~A. Sloane, \emph{Hardy inequalities for fractional integrals on general domains}, J. Funct. Anal. {\bf 259} (2010), no.~6, 1369--1379.

\bibitem{MR3910887}
A. Mallick,
\emph{Extremals for fractional order {H}ardy-{S}obolev-{M}az'ya inequality}, Calc. Var. Partial Differential Equations {\bf 58}, 2 (2019), Paper No. 45, 37.

\bibitem{Nguyen2018}
H.-M. Nguyen and M. Squassina, \emph{Fractional Caffarelli-Kohn-Nirenberg inequalities}, J. Funct. Anal. {\bf 274} (2018), no.~9, 2661--2672.


\bibitem{Sahu2025}
V. Sahu, \emph{Weighted fractional Hardy inequalities with singularity on any flat submanifold}, J. Math. Anal. Appl. {\bf 546} (2025), no.~2, Paper No. 129227, 16 pp.


\end{thebibliography}
\end{document}